\newcounter{q3}
\newcommand{\dsm}[3]{
{\if#20{\if#31{\frac{\partial #1}{\partial y}}\else
          {\frac{\partial^{#3} #1}{\partial y^{#3}}}
        \fi}\else
  {\if#30{\if#21{\frac{\partial #1}{\partial x}}\else
            {\frac{\partial^{#2} #1}{\partial x^{#2}}}
          \fi}\else
    {\setcounter{q3}{#2}\addtocounter{q3}{#3}
    \frac{\partial{\if{1}\arabic{q3}^{}\else^{ \arabic{q3} }\fi}#1}
    {{\if#20\else{\partial x{\if#21\else{^{#2}}\fi}}\fi}
     {\if#30\else{\partial y\if#31\else{^{#3}}\fi}\fi} }}
   \fi}
\fi} }
\newcommand{\dzm}[2]{
{\if#10{\if#21{\frac{\partial}{\partial\overline{\zeta}}}\else
          {\frac{\partial^{#2}}{\partial\overline{\zeta}^{#2}}}
        \fi}\else
  {\if#20{\if#11{\frac{\partial}{\partial\zeta}}\else
            {\frac{\partial^{#1}}{\partial\zeta^{#1}}}
          \fi}\else
    {\setcounter{q3}{#1}\addtocounter{q3}{#2}
    \frac{\partial{\if{1}\arabic{q3}^{}\else^{ \arabic{q3} }\fi}}
    {{\if#10\else{\partial\zeta{\if#21\else{^{#1}}\fi}}\fi}
     {\if#20\else{\partial\overline{\zeta}\if#21\else{^{#2}}\fi}\fi} }}
   \fi}
\fi} }
\newcounter{q2}
\newtheoremstyle{theor}
  {\medskipamount}
  {\medskipamount}
  {\itshape}
  {}
  {\bfseries}
  {.}
  {.5em}
  {}
\newtheorem{definition}{Definition}[section]
\newtheorem{theorem}[definition]{Theorem}
\newtheorem{lemma}[definition]{Lemma}
\newtheorem{proposition}[definition]{Proposition}
\newtheorem{corollary}[definition]{Corollary}
\theoremstyle{definition}
\newtheorem{remark}[definition]{Remark}
\newtheorem{example}[definition]{Example}
\numberwithin{equation}{section}
\newtheoremstyle{remarks}
  {0mm}
  {0mm}
  {\itshape}
  {}
  {\itshape}
  {.}
  {.5em}
  {}
\makeatletter \@addtoreset{equation}{section} \makeatother
\begin{document}

\subsection*{\center UNIFORM BOUNDEDNESS OF PRETANGENT SPACES AND LOCAL STRONG ONE-SIDE POROSITY}\begin{center}\textbf{Viktoriia Bilet and Oleksiy Dovgoshey} \end{center}
\parshape=5
1cm 11.5cm 1cm 11.5cm 1cm 11.5cm 1cm 11.5cm 1cm 11.5cm \noindent \small {\bf Abstract.}
 Let $(X,d,p)$ be a pointed metric space. A pretangent space to $X$ at $p$ is a metric space consisting of some equivalence classes of convergent to $p$ sequences $(x_n), x_n \in X,$ whose degree of convergence is comparable with a given scaling sequence $(r_n), r_n\downarrow 0.$ We say that $(r_n)$ is normal if there is $(x_n)$ such that $\mid d(x_n,p)-r_n\mid=o(r_n)$ for $n\to\infty.$ Let $\mathbf{\Omega_{p}^{X}(n)}$ be the set of  pretangent spaces to $X$ at $p$  with normal scaling sequences.  We prove that the spaces from $\mathbf{\Omega_{p}^{X}(n)}$ are uniformly bounded if and only if $\{d(x,p): x\in X\}$ is a so-called completely strongly porous set.

\parshape=2
1cm 11.5cm 1cm 11.5cm  \noindent \small {\bf Key words:} metric space, tangent space to metric space, boundedness, local strong one-side porosity.

 \bigskip
\textbf{ AMS 2010 Subject Classification: 54E35, 28A10, 28A05}


\large\section {Introduction } \hspace*{\parindent}The pretangent and tangent spaces to
a metric space $(X,d)$ at a given point $p\in X$ were defined in \cite{DM}(see
also \cite{MD}). For convenience we recall some results and terminology related to the pretangent spaces.

Let $(X,d,p)$ be a pointed metric space with a metric $d$ and a marked point $p.$ Fix a
sequence $\tilde{r}$ of positive real numbers $r_n$ tending to zero. In what follows
$\tilde{r}$ will be called a \emph{scaling sequence}. Let us denote by $\tilde{X}$
the set of all sequences of points from X.
\begin{definition}\label{D1.1} Two sequences $\tilde{x}=(x_n)_{n\in \mathbb N}$ and $\tilde{y}=(y_n)_{n\in \mathbb
N},$ $\tilde{x}, \tilde{y} \in \tilde{X}$ are mutually stable with respect to
$\tilde{r}=(r_n)_{n\in \mathbb N}$ if the finite limit
\begin{equation}\label{eq1.2}
\lim_{n\to\infty}\frac{d(x_n,y_n)}{r_n}:=\tilde{d}_{\tilde{r}}(\tilde{x},\tilde{y})=\tilde{d}(\tilde{x},\tilde{y})\end{equation} exists.\end{definition}
We shall say that a family $\tilde{F}\subseteq\tilde{X}$ is \emph{self-stable} (w.r.t.
$\tilde{r}$) if any two $\tilde{x}, \tilde{y} \in \tilde{F}$ are mutually stable. A
family $\tilde{F}\subseteq\tilde{X}$ is \emph{maximal self-stable} if $\tilde{F}$ is
self-stable and for an arbitrary $\tilde{z}\in \tilde{X}$ either $\tilde{z}\in\tilde{F}$
or there is $\tilde{x}\in\tilde{F}$ such that $\tilde{x}$ and $\tilde{z}$ are not
mutually stable.

A standard application of Zorn's lemma leads to the following
\begin{proposition}\label{Pr1.2}Let $(X,d,p)$ be a pointed metric space. Then for every scaling sequence $\tilde{r}=(r_n)_{n\in \mathbb N}$ there exists a maximal self-stable family $\tilde{X}_{p,\tilde{r}}$ such that $\tilde{p}:=(p,p,...)~\in~\tilde{X}_{p,\tilde{r}}.$
\end{proposition}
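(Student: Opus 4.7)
The plan is to apply Zorn's lemma to the collection
\[
\mathcal{F} := \{\tilde{F} \subseteq \tilde{X} : \tilde{F} \text{ is self-stable w.r.t. } \tilde{r} \text{ and } \tilde{p} \in \tilde{F}\},
\]
partially ordered by inclusion. First I would check that $\mathcal{F}$ is non-empty: the singleton $\{\tilde{p}\}$ belongs to $\mathcal{F}$ because $\lim_{n\to\infty} d(p,p)/r_n = 0$ trivially exists, so $\tilde{p}$ is self-stable (in fact, $\tilde{d}(\tilde{p},\tilde{p}) = 0$).

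Next I would show that every chain $\mathcal{C} \subseteq \mathcal{F}$ has an upper bound in $\mathcal{F}$. The natural candidate is $\tilde{F}^{*} := \bigcup_{\tilde{F} \in \mathcal{C}} \tilde{F}$. Clearly $\tilde{p} \in \tilde{F}^{*}$. To verify self-stability, pick any $\tilde{x}, \tilde{y} \in \tilde{F}^{*}$; then $\tilde{x} \in \tilde{F}_1$ and $\tilde{y} \in \tilde{F}_2$ for some $\tilde{F}_1, \tilde{F}_2 \in \mathcal{C}$, and since $\mathcal{C}$ is a chain, one of $\tilde{F}_1, \tilde{F}_2$ contains the other, say $\tilde{F}_1 \subseteq \tilde{F}_2$. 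Then $\tilde{x}, \tilde{y} \in \tilde{F}_2$, which is self-stable, so the limit in \eqref{eq1.2} exists. Hence $\tilde{F}^{*} \in \mathcal{F}$ and it is an upper bound of $\mathcal{C}$.

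Zorn's lemma now yields a maximal element $\tilde{X}_{p,\tilde{r}} \in \mathcal{F}$. Finally I would verify that this element is maximal self-stable in the sense stipulated by the paper: suppose there is $\tilde{z} \in \tilde{X}$ such that $\tilde{z}$ is mutually stable with every $\tilde{x} \in \tilde{X}_{p,\tilde{r}}$. Then $\tilde{X}_{p,\tilde{r}} \cup \{\tilde{z}\}$ is still self-stable and contains $\tilde{p}$, so it lies in $\mathcal{F}$; by maximality, $\tilde{z} \in \tilde{X}_{p,\tilde{r}}$. This is exactly the dichotomy required in the definition of maximal self-stability.

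The argument is entirely routine, so I do not anticipate a serious obstacle; the only point demanding a moment of care is the chain-union step, where one must use the chain condition explicitly to bring any two chosen sequences into a single self-stable family before invoking the existence of the limit \eqref{eq1.2}.
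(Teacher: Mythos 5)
Your proof is correct and is precisely the ``standard application of Zorn's lemma'' that the paper invokes without writing out: order the self-stable families containing $\tilde{p}$ by inclusion, take unions along chains, and check that a maximal element of this poset is maximal self-stable in the sense of the definition. All three steps, including the final adjunction argument showing that no $\tilde{z}$ mutually stable with everything can lie outside the maximal element, are handled correctly.
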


Note that the condition $\tilde{p}\in\tilde{X}_{p,\tilde{r}}$ implies the equality
$\mathop{\lim}\limits_{n\to\infty}d(x_n,p)=0 $ for every $\tilde{x}=(x_n)_{n\in \mathbb N}\in\tilde{X}_{p,\tilde{r}}.$

Consider a function $\tilde{d}:\tilde{X}_{p,\tilde{r}}\times\tilde{X}_{p,\tilde{r}}\rightarrow\mathbb R$ where
$\tilde{d}(\tilde{x},\tilde{y})=\tilde{d}_{\tilde{r}}(\tilde{x},\tilde{y})$ is defined
by \eqref{eq1.2}. Obviously, $\tilde{d}$ is symmetric and nonnegative. Moreover, the
triangle inequality for $d$ implies
$$\tilde{d}(\tilde{x},\tilde{y})\leq\tilde{d}(\tilde{x},\tilde{z})+\tilde{d}(\tilde{z},\tilde{y})$$
for all $\tilde{x},\tilde{y},\tilde{z}\in\tilde{X}_{p,\tilde{r}}.$ Hence $(\tilde{X}_{p,\tilde{r}},\tilde{d})$
is a pseudometric space.
\begin{definition}\label{D1.3} A pretangent space to the space X (at the point p w.r.t. $\tilde{r}$) is the metric identification of a pseudometric space
$(\tilde{X}_{p,\tilde{r}},\tilde{d}).$\end{definition}

Since the notion of pretangent space is basic for the paper, we remind this metric
identification construction.

Define a relation $\sim$ on $\tilde{X}_{p,\tilde{r}}$ as $\tilde x\sim \tilde y$ if and only if
$\tilde d(\tilde x, \tilde y)=0.$ Then $\sim$ is an equivalence relation. Let us denote
by $\Omega_{p,\tilde r}^{X}$ the set of equivalence classes in $\tilde{X}_{p,\tilde{r}}$ under the
equivalence relation $\sim.$ It follows from general properties of the pseudometric spaces
(see, for example, \cite{Kelley}) that if $\rho$ is defined on $\Omega_{p,\tilde
r}^{X}$ as \begin{equation} \label{eq1.4}\rho(\gamma,\beta):=\tilde d (\tilde x, \tilde
y)\end{equation}for $\tilde x\in \gamma$ and $\tilde y\in \beta,$ then $\rho$ is a
well-defined metric on $\Omega_{p,\tilde r}^{X}.$ By definition, the metric
identification of $(\tilde{X}_{p,\tilde{r}}, \tilde d)$ is the metric space $(\Omega_{p,\tilde
r}^{X}, \rho).$

It should be observed that $\Omega_{p,\tilde r}^{X}\ne \varnothing$ because the constant
sequence $\tilde p$ belongs to $\tilde{X}_{p,\tilde{r}}.$ Thus every pretangent space
$\Omega_{p, \tilde r}^{X}$ is a pointed metric space with the natural distinguished point
$\alpha=\pi (\tilde p),$ (see diagram~\eqref{eq1.5} below).

Let $(n_k)_{k\in\mathbb N}$ be an infinite, strictly increasing sequence of natural
numbers. Let us denote by $\tilde r'$ the subsequence $(r_{n_k})_{k\in \mathbb N}$ of
the scaling sequence $\tilde r=(r_n)_{n\in\mathbb N}$ and let $\tilde
x':=(x_{n_k})_{k\in\mathbb N}$ for every $\tilde x=(x_n)_{n\in\mathbb N}\in\tilde
X.$ It is clear that if $\tilde x$ and $\tilde y$ are mutually stable w.r.t. $\tilde r,$
then $\tilde x'$ and $\tilde y'$ are mutually stable w.r.t. $\tilde r'$ and
$\tilde d_{\tilde r}(\tilde x, \tilde y)=\tilde d_{\tilde r'}(\tilde x', \tilde
y').$ If $\tilde X_{p,\tilde r}$ is a maximal self-stable (w.r.t $\tilde
r$) family, then, by Zorn's Lemma, there exists a maximal self-stable (w.r.t $\tilde
r'$) family $\tilde X_{p,\tilde r'}$ such that $$\{\tilde x':\tilde x \in \tilde
X_{p,\tilde r}\}\subseteq \tilde X_{p,\tilde r'}.$$ Denote by $in_{\tilde r'}$ the map
from $\tilde X_{p,\tilde r}$ to $\tilde X_{p,\tilde r'}$ with $in_{\tilde r'}(\tilde
x)=\tilde x'$ for all $\tilde x\in\tilde X_{p,\tilde r}.$ It follows from \eqref{eq1.4}
that after metric identifications $in_{\tilde r'}$ passes to an isometric embedding
$em':\Omega_{p,\tilde r}^{X}~\rightarrow~\Omega_{p,\tilde r'}^{X}$ under which the
diagram
\begin{equation} \label{eq1.5}
\begin{array}{ccc}
\tilde X_{p, \tilde r} & \xrightarrow{\ \ \mbox{\emph{in}}_{\tilde r'}\ \ } &
\tilde X_{p, \tilde r^{\prime}} \\
\!\! \!\! \!\! \!\! \! \pi\Bigg\downarrow &  & \! \!\Bigg\downarrow \pi^{\prime}
\\
\Omega_{p, \tilde r}^{X} & \xrightarrow{\ \ \mbox{\emph{em}}'\ \ \ } & \Omega_{p, \tilde
r^{\prime}}^{X}
\end{array}
\end{equation}is commutative. Here $\pi$ and
$\pi'$ are the natural projections, $$\pi(\tilde x):=\{\tilde y \in \tilde X_{p,\tilde
r}: \tilde d_{\tilde r}(\tilde x, \tilde y)=0\} \quad \mbox{and} \quad \pi'(\tilde x):=\{\tilde y \in
\tilde X_{p,\tilde r'}: \tilde d_{\tilde r'}(\tilde x, \tilde y)=0\}.$$

Let $X$ and $Y$ be metric spaces. Recall that a map $f:X\rightarrow Y$ is called an
\emph{isometry} if $f$ is distance-preserving and onto.

\begin{definition}\label{D1.4}A pretangent $\Omega_{p,\tilde
r}^{X}$ is tangent if $em':\Omega_{p,\tilde r}^{X}\rightarrow \Omega_{p,\tilde r'}^{X}$
is an isometry for every~$\tilde r'.$\end{definition}

The following lemmas will be used in sections 3 and 4 of the paper.

\begin{lemma}\label{Lem1.6}\emph{\cite{DAK}}
Let $\mathbf{\mathfrak{B}}$ be a countable subfamily
of $\tilde X$ and let $\tilde\rho=(\rho_n)_{n\in\mathbb N}$ be a scaling sequence.
Suppose that the inequality $$\limsup_{n\to\infty}\frac{d(b_n,p)}{\rho_n}<\infty$$ holds for every $\tilde b=(b_n)_{n\in\mathbb N}\in\mathbf{\mathfrak{B}}.$ Then there is an infinite
subsequence $\tilde \rho'$ of $\tilde \rho$ such that the family
$\mathbf{\mathfrak{B'}}=\{\tilde b': \tilde b \in
\mathbf{\mathfrak{B'}}\}$ is self-stable w.r.t. $\tilde \rho'.$
\end{lemma}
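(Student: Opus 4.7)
The plan is a standard Cantor diagonal extraction applied to the countable family of pairs of sequences from $\mathfrak{B}$.

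First, I would observe that for any $\tilde{b}=(b_n),\tilde{c}=(c_n)\in\mathfrak{B}$ the triangle inequality combined with the hypothesis yields
$$\limsup_{n\to\infty}\frac{d(b_n,c_n)}{\rho_n}\le\limsup_{n\to\infty}\frac{d(b_n,p)}{\rho_n}+\limsup_{n\to\infty}\frac{d(c_n,p)}{\rho_n}<\infty.$$
Hence the real sequence $\bigl(d(b_n,c_n)/\rho_n\bigr)_{n\in\mathbb{N}}$ is bounded and, by Bolzano--Weierstrass, admits a convergent subsequence with a finite limit.

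Since $\mathfrak{B}$ is countable, the collection of unordered pairs from $\mathfrak{B}$ is countable; I would enumerate them as $\bigl(\tilde{b}^{(k)},\tilde{c}^{(k)}\bigr)$, $k\in\mathbb{N}$. I then construct inductively a nested family of infinite index sets $\mathbb{N}\supseteq N_1\supseteq N_2\supseteq\cdots$ such that along $n\in N_k$ the ratio $d\bigl(b^{(k)}_n,c^{(k)}_n\bigr)/\rho_n$ converges to a finite limit; at each step this is possible by the previous paragraph applied inside $N_{k-1}$. A diagonal choice $n_1<n_2<\cdots$ with $n_k\in N_k$ yields an infinite index set along which every enumerated pair converges, and the subsequence $\tilde{\rho}'=(\rho_{n_k})_{k\in\mathbb{N}}$ is still a scaling sequence since $\rho_n\downarrow 0$.

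Finally, for every fixed $m$ the tail $(n_k)_{k\ge m}$ is contained in $N_m$, hence $d\bigl(b^{(m)}_{n_k},c^{(m)}_{n_k}\bigr)/\rho_{n_k}$ converges as $k\to\infty$. Thus the restrictions along $(n_k)$ of any two sequences from $\mathfrak{B}$ are mutually stable w.r.t.\ $\tilde{\rho}'$, which is precisely the self-stability of $\mathfrak{B}'=\{\tilde{b}':\tilde{b}\in\mathfrak{B}\}$ claimed by the lemma. The only substantive ingredient is the uniform $\limsup$ bound, which guarantees finite limits for each pair; the combinatorial step is a routine Cantor diagonalization, so no serious obstacle is anticipated.
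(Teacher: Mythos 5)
Your argument is correct: the triangle-inequality bound makes each ratio sequence $\bigl(d(b_n,c_n)/\rho_n\bigr)$ bounded, and the Cantor diagonalization over the countably many pairs produces a single subsequence along which every pair is mutually stable, which is exactly self-stability of $\mathfrak{B}'$. The paper itself gives no proof of this lemma (it is quoted from \cite{DAK}), and your diagonal extraction is precisely the standard argument used there, so nothing further is needed.
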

The next lemma follows from Corollary 3.3 of \cite{D}.

\begin{lemma}\label{Lem1.7}
Let $(X,d)$ be a metric space and let $Y, Z$ be dense subsets of $X.$ Then for every $p\in Y \cap Z$ and every $\Omega_{p, \tilde r}^{Y}$ there are $\Omega_{p, \tilde r}^{Z}$ and an isometry $f:\Omega_{p, \tilde r}^{Y}\rightarrow \Omega_{p, \tilde r}^{Z}$ such that $f(\alpha_Y)=\alpha_Z$ where $\alpha_Y$ and $\alpha_Z$ are the marked points of $\Omega_{p, \tilde r}^{Y}$ and $\Omega_{p, \tilde r}^{Z}$ respectively.
\end{lemma}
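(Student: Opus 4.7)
\bigskip
\noindent\textbf{Proof plan for Lemma~\ref{Lem1.7}.}

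The plan is to transport the given maximal self-stable family in $\tilde Y$ to a self-stable family in $\tilde Z$ by coordinate-wise density approximation, extend the latter to a maximal self-stable family by Zorn's lemma, and then show the resulting map of pretangent spaces is an isometry. Fix a maximal self-stable family $\tilde Y_{p,\tilde r}$ representing $\Omega_{p,\tilde r}^{Y}$. Since $Z$ is dense in $X$, for each $\tilde y=(y_n)\in\tilde Y_{p,\tilde r}$ I would choose $\tilde z^{(y)}=(z_n^{(y)})\in\tilde Z$ with
$$d(z_n^{(y)},y_n)<\frac{r_n}{n},\qquad n\in\mathbb N,$$
and, in the case $\tilde y=\tilde p$, set $\tilde z^{(p)}=\tilde p$ (which is admissible since $p\in Z$). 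From $d(z_n^{(y)},p)\le d(z_n^{(y)},y_n)+d(y_n,p)\to 0$ each $\tilde z^{(y)}$ converges to $p$.

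Next I would verify that the family $\tilde Z_{0}:=\{\tilde z^{(y)}:\tilde y\in\tilde Y_{p,\tilde r}\}$ is self-stable with respect to $\tilde r$ and that its induced pseudometric coincides with $\tilde d_{\tilde r}$ pulled back from $\tilde Y_{p,\tilde r}$. Indeed, for $\tilde y,\tilde y'\in\tilde Y_{p,\tilde r}$ the reverse triangle inequality gives
$$\left|d(z_n^{(y)},z_n^{(y')})-d(y_n,y_n')\right|\le d(z_n^{(y)},y_n)+d(z_n^{(y')},y_n')=o(r_n),$$
so dividing by $r_n$ and passing to the limit shows that $\tilde d_{\tilde r}(\tilde z^{(y)},\tilde z^{(y')})$ exists and equals $\tilde d_{\tilde r}(\tilde y,\tilde y')$. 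I would then apply Zorn's lemma (exactly as in Proposition~\ref{Pr1.2}, but starting from $\tilde Z_{0}$) to obtain a maximal self-stable family $\tilde Z_{p,\tilde r}\supseteq\tilde Z_{0}$, thereby producing a concrete pretangent space $\Omega_{p,\tilde r}^{Z}$. Defining $f\colon\Omega_{p,\tilde r}^{Y}\to\Omega_{p,\tilde r}^{Z}$ by $f(\pi_{Y}(\tilde y)):=\pi_{Z}(\tilde z^{(y)})$ is then well-defined and distance-preserving by the displayed identity of distances, and $f(\alpha_{Y})=\alpha_{Z}$ because $\tilde z^{(p)}=\tilde p$.

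The main obstacle is surjectivity of $f$, where the maximality hypothesis on $\tilde Y_{p,\tilde r}$ must genuinely be used. Given $\tilde w=(w_n)\in\tilde Z_{p,\tilde r}$, I would pick $\tilde u=(u_n)\in\tilde Y$ with $d(u_n,w_n)<r_n/n$ by density of $Y$. For any $\tilde y\in\tilde Y_{p,\tilde r}$ the same reverse-triangle argument yields
$$\left|\frac{d(u_n,y_n)}{r_n}-\frac{d(w_n,z_n^{(y)})}{r_n}\right|\longrightarrow 0,$$
and since $\tilde w,\tilde z^{(y)}$ both lie in the self-stable family $\tilde Z_{p,\tilde r}$, the right-hand ratio converges; hence so does $d(u_n,y_n)/r_n$. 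Thus $\tilde u$ is mutually stable with every element of $\tilde Y_{p,\tilde r}$, and maximality forces $\tilde u\in\tilde Y_{p,\tilde r}$. Finally the sequence $\tilde z^{(u)}$ associated to $\tilde u$ satisfies $d(z_n^{(u)},w_n)\le d(z_n^{(u)},u_n)+d(u_n,w_n)=o(r_n)$, so $\pi_{Z}(\tilde z^{(u)})=\pi_{Z}(\tilde w)$ and $f(\pi_{Y}(\tilde u))=\pi_{Z}(\tilde w)$, completing the proof.
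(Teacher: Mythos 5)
Your proposal is correct and complete. Note that the paper itself does not prove Lemma~\ref{Lem1.7}; it only cites Corollary 3.3 of the reference \cite{D}, so there is no in-text argument to compare against. What you supply is the standard self-contained density-transfer proof: approximate each representative sequence to within $o(r_n)$ inside the other dense subset, observe that the reverse triangle inequality makes this approximation preserve all normalized mutual distances (hence self-stability and the value of $\tilde d_{\tilde r}$), extend by Zorn's lemma, and use maximality of $\tilde Y_{p,\tilde r}$ to get surjectivity. The surjectivity step is the one place where an error could hide, and you handle it properly: you pull $\tilde w$ back to $\tilde u\in\tilde Y$, verify mutual stability of $\tilde u$ with every member of $\tilde Y_{p,\tilde r}$ via the same $o(r_n)$ estimate, invoke maximality to conclude $\tilde u\in\tilde Y_{p,\tilde r}$, and check $\pi_Z(\tilde z^{(u)})=\pi_Z(\tilde w)$. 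The treatment of the marked point ($\tilde z^{(p)}=\tilde p$, legitimate because $p\in Y\cap Z$) is also correct. This gives a direct proof of exactly the statement needed, whereas the cited Corollary 3.3 of \cite{D} is a more general result about pretangent spaces to subspaces; your argument has the advantage of keeping the paper self-contained at the cost of a page of routine estimates.
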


It was proved in \cite{DAK} that a bounded tangent space to $X$ at $p$ exists if and only if the distance set
\begin{equation*}
S_{p}(X)=\{d(x,p): x\in X\}
\end{equation*}
is strongly porous at 0. The necessary and sufficient conditions under which all pretangent spaces to $X$ at $p$ are bounded also formulated in terms of the local porosity of the set $S_{p}(X)$ (see \cite{BD1} for details).

In the present paper we shall consider some interconnections between pretangent spaces and a subclass of the locally strongly porous on the right sets.


\section {Completely strongly porous sets}
\hspace*{\parindent} Let us recall the definition of the right hand porosity. Let $E$ be a subset of $\mathbb R^{+}=[0,\infty).$
\begin{definition}\emph{\cite{Th}}\label{D1}
The right hand porosity of $E$ at 0 is the quantity
\begin{equation}\label{L1}
p^{+}(E,0):=\limsup_{h\to 0^{+}}\frac{\lambda(E,0,h)}{h}
\end{equation}
where $\lambda(E,0,h)$ is the length of the largest open subinterval of $(0,h)$ that
contains no points of $E$. The set $E$ is strongly porous at 0 if $p^{+}(E,0)=1.$
\end{definition}
Let $\tilde \tau=(\tau_n)_{n\in\mathbb N}$ be a sequence of real numbers. We shall say
that $\tilde \tau$ is almost decreasing if the inequality $\tau_{n+1}\le\tau_{n}$ holds
for sufficiently large $n.$ Write $\tilde E_{0}^{d}$ for the set of almost decreasing
sequences $\tilde \tau$ with $\mathop{\lim}\limits_{n\to\infty}\tau_{n}=0$ and
$\tau_{n}\in E\setminus \{0\}$ for $n\in\mathbb N.$

Define $\tilde I_{E}^{d}$ to be the set of sequences of open intervals $(a_n,b_n)\subseteq
\mathbb R^{+}, n\in\mathbb N,$ meeting the conditions:

\bigskip
$\bullet$ \emph{Each $(a_n, b_n)$ is a connected component of the set $Ext E=Int(\mathbb
R^{+}\setminus E),$ i.e., $(a_n,b_n)\cap E=\varnothing$ but  $$((a,b)\ne (a_n, b_n))\Rightarrow((a,b)\cap E \ne
\varnothing)$$} \emph{for every}
$(a,b)\supseteq(a_n,b_n);$

$\bullet$ $(a_n)_{n\in\mathbb N}$ \emph{is almost decreasing};

\emph{$\bullet$
$\mathop{\lim}\limits_{n\to\infty}a_{n}=0$ and
$\mathop{\lim}\limits_{n\to\infty}\frac{b_n-a_n}{b_n}=1.$}

\bigskip

Define also an equivalence $\asymp$ on the set of sequences of strictly positive
numbers as follows. Let $\tilde a=(a_n)_{n\in\mathbb N}$ and
$\tilde{\gamma}=(\gamma_n)_{n\in\mathbb N}.$ Then $\tilde a \asymp \tilde {\gamma}$ if
there are some constants $c_1, c_2
>0$ such that
$
c_1 a_n < \gamma_n < c_2 a_n
$
for $n\in\mathbb N.$
\begin{definition}\label{D2*}
Let $E\subseteq\mathbb R^{+}$ and let $\tilde \tau
\in \tilde E_{0}^{d}.$ The set $E$ is $\tilde \tau$-strongly porous at 0 if there is a
sequence $\{(a_n, b_n)\}_{n\in\mathbb N}\in\tilde I_{E}^{d}$ such that
$
\tilde\gamma \asymp \tilde a
$
where $\tilde a=(a_n)_{n\in\mathbb N}.$ $E$ is completely strongly porous
at 0 if $E$ is $\tilde \tau$-strongly porous at 0 for every $\tilde \tau \in \tilde
E_{0}^{d}.$
\end{definition}

The last definition is an equivalent  form of Definition 2.4 from \cite{BD2} (that follows directly from Lemma 2.11 in \cite{BD2}). We denote by \textbf{\emph{CSP}} the set of all completely strongly porous at 0 subsets of $\mathbb R^{+}$. It is clear that every $E\in\textbf{\emph{CSP}}$ is strongly porous at 0 but not conversely. Moreover, if 0 is an isolated point of $E\subseteq\mathbb R^{+},$ then $E\in \textbf{\emph{CSP}}.$

The next lemma immediately follows from Definition~\ref{D2*}.

\begin{lemma}\label{Lem2.3}
Let $E\subseteq\mathbb R^{+},$
$\tilde\gamma\in\tilde E_{0}^{d},$  $\{(a_n, b_n)\}_{n\in\mathbb N}\in\tilde I_{E}^{d}$
and let $\tilde a=(a_n)_{n\in\mathbb N}.$ The equivalence
$\tilde\gamma\asymp\tilde a$ holds if and only if we have
\begin{equation*}\label{L2*}
\limsup_{n\to\infty}\frac{a_n}{\gamma_n}<\infty \quad\mbox{and}\quad \gamma_n \le a_n
\end{equation*} for sufficiently large $n.$
\end{lemma}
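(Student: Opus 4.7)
The plan is to prove both implications directly from the definitions of $\asymp$ and $\tilde{I}_E^d$. The forward direction is the substantive step; the converse is essentially bookkeeping with constants.

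For the forward implication, I would start from constants $c_1, c_2 > 0$ satisfying $c_1 a_n < \gamma_n < c_2 a_n$ for every $n$. The inequality $a_n/\gamma_n < 1/c_1$ immediately yields $\limsup_{n\to\infty} a_n/\gamma_n \le 1/c_1 < \infty$. To prove $\gamma_n \le a_n$ for large $n$, I would combine two facts: $\gamma_n \in E$ (since $\tilde{\gamma}\in \tilde{E}_0^d$) and $(a_n,b_n)\cap E = \varnothing$ (the first bullet in the definition of $\tilde{I}_E^d$). These give the dichotomy $\gamma_n \le a_n$ or $\gamma_n \ge b_n$ for each $n$. The second alternative is then ruled out for large $n$ by the third bullet: from $(b_n - a_n)/b_n \to 1$ one gets $a_n/b_n \to 0$, so $c_2 a_n < b_n$ eventually; combined with $\gamma_n < c_2 a_n$, this forces $\gamma_n < b_n$, leaving only $\gamma_n \le a_n$.

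For the reverse implication, I would fix $M$ strictly greater than $\limsup_{n\to\infty} a_n/\gamma_n$, and choose $N$ large enough that both $a_n/\gamma_n < M$ and $\gamma_n \le a_n$ hold for all $n \ge N$. Setting $c_1 = 1/M$ and $c_2 = 2$ then yields $c_1 a_n < \gamma_n < c_2 a_n$ for every $n \ge N$. The finitely many indices $n < N$, on which $a_n$ and $\gamma_n$ are strictly positive real numbers, can be absorbed by shrinking $c_1$ further and enlarging $c_2$ further so that both inequalities hold uniformly over $\mathbb N$, giving $\tilde\gamma \asymp \tilde a$.

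The only subtle point is the deduction $\gamma_n \le a_n$ in the forward direction: this is where all three structural properties of $\tilde{I}_E^d$ must be used, namely the $E$-freeness of each component $(a_n,b_n)$ together with the asymptotic separation $a_n/b_n \to 0$, which rules out the alternative $\gamma_n \ge b_n$. Everything else reduces to routine manipulation of the constants $c_1, c_2$.
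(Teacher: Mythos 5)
Your proof is correct. The paper offers no argument for this lemma (it simply states that it ``immediately follows'' from Definition~2.2), and your verification --- in particular using $\gamma_n\in E$ together with $(a_n,b_n)\cap E=\varnothing$ and $b_n/a_n\to\infty$ to exclude the alternative $\gamma_n\ge b_n$ --- is precisely the direct check the authors leave to the reader.
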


Define a set $\mathbb N_{N_1}$ as $\{N_1, N_1+1,...\}$ for $N_1\in\mathbb N.$

\begin{definition}\label{univ} Let $$\tilde A =\{(a_n,b_n)\}_{n\in\mathbb
N}~\in~\tilde I_{E}^{d}\quad\mbox{and}\quad \tilde L =\{(l_n,m_n)\}_{n\in\mathbb
N}~\in~\tilde I_{E}^{d}.$$ Write $\tilde A\preceq\tilde L$ if there are $N_1 \in\mathbb N$ and $f:~\mathbb N_{N_1}~\rightarrow~\mathbb N$ such that
$ a_n = l_{f(n)}$ for every $n\in\mathbb N_{N_1}.$
$\tilde L$ is universal if $\tilde B \preceq \tilde
L$ holds for every $\tilde B\in\tilde I_{E}^{d}.$
\end{definition}

Let $\tilde L=\{(l_n, m_n)\}_{n\in\mathbb N}\in\tilde I_{E}^{d}$ be universal and let
\begin{equation}\label{L13}
M(\tilde L):=\limsup_{n\to\infty}\frac{l_n}{m_{n+1}}.
\end{equation}

In what follows $ac E$ means the set of all accumulation points of a set $E.$
\begin{theorem}\emph{\cite{BD2}}\label{ImpTh}
Let $E\subseteq\mathbb R^{+}$ be strongly porous at 0 and $0\in ac E.$ Then $E\in \textbf{CSP}$ if and only if there is an universal $\tilde L\in \tilde I_{E}^{d}$ such that $M(\tilde L)<\infty.$
\end{theorem}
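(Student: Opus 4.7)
My plan is to prove the two implications separately; both hinge on the following pairing: to each $\tau \in E\setminus\{0\}$ near $0$, associate the smallest component $(l, m)$ of $\mathrm{Ext}\, E$ with $l \ge \tau$.

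\textbf{Sufficiency} ($\Leftarrow$). Suppose $\tilde L = \{(l_n, m_n)\}$ is universal with $M := M(\tilde L) < \infty$. For arbitrary $\tilde\tau \in \tilde E_0^d$, I choose, for each sufficiently large $n$, an index $k(n)$ with $l_{k(n)+1} < \tau_n \le l_{k(n)}$; this is possible since $l_k \to 0$ and $(l_k)$ is almost decreasing. Because $\tau_n \in E$ and $(l_{k(n)+1}, m_{k(n)+1}) \cap E = \varnothing$, the inequality $\tau_n > l_{k(n)+1}$ forces $\tau_n \ge m_{k(n)+1}$, whence
\[
\tau_n \;\le\; l_{k(n)} \;\le\; \frac{l_{k(n)}}{m_{k(n)+1}}\,\tau_n \;\le\; (M+1)\,\tau_n
\]
for $n$ large. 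Setting $a_n := l_{k(n)}$, $b_n := m_{k(n)}$, the almost decrease of $(\tau_n)$ forces $k(n)$ to be eventually non-decreasing (otherwise $\tau_{n+1} > l_{k(n+1)+1} \ge l_{k(n)} \ge \tau_n$), so $(a_n)$ is almost decreasing; $a_n \to 0$ because $k(n) \to \infty$; and $a_n/b_n \to 0$ from $\tilde L \in \tilde I_E^d$. Thus $\{(a_n, b_n)\} \in \tilde I_E^d$ and Lemma~\ref{Lem2.3} gives $\tilde\tau \asymp \tilde a$. Since $\tilde\tau$ was arbitrary, $E \in \textbf{CSP}$.

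\textbf{Necessity} ($\Rightarrow$). Assume $E \in \textbf{CSP}$; passing to $\overline E$ if necessary, I assume $E$ closed so that every finite endpoint of a component of $\mathrm{Ext}\, E$ belongs to $E$. The plan is to construct $\tilde L$ as the decreasing-in-left-endpoint enumeration of a carefully selected subfamily $\mathcal{L}$ of components of $\mathrm{Ext}\, E$ near $0$. A natural candidate is the set of components that can appear as the $k$-th term of some $\tilde B \in \tilde I_E^d$ for $k$ arbitrarily large; equivalently, one may pre-compute $\mathcal{L}$ by applying \textbf{CSP} to a sequence $\tilde\tau \in \tilde E_0^d$ dense enough in $E$ near $0$. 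Universality is then almost immediate: any $\tilde B \in \tilde I_E^d$ has $a_k/b_k \to 0$, so cofinitely many of its terms qualify for $\mathcal{L}$.

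The core analytic step is $M(\tilde L) < \infty$. Assume by contradiction that a subsequence satisfies $l_{n_j}/m_{n_j+1} \to \infty$; since $m_{n_j+1} \in E$ and $m_{n_j+1} \le l_{n_j} \to 0$, extracting a monotone subsequence produces $\tilde\tau \in \tilde E_0^d$ with $\tau_j = m_{n_j+1}$. By \textbf{CSP}, there exists $\{(a_j', b_j')\} \in \tilde I_E^d$ with $a_j' \asymp m_{n_j+1}$, so $m_{n_j+1} \le a_j' \le C m_{n_j+1} \ll l_{n_j}$. For $j$ large, $a_j'/b_j'$ is arbitrarily small, placing $(a_j', b_j') \in \mathcal{L}$; but then $a_j' = l_s$ for some $s$ with $l_{n_j+1} < l_s < l_{n_j}$, contradicting the consecutiveness of $n_j$ and $n_j+1$ in $\tilde L$. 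The verification of $l_n/m_n \to 0$ follows the same template, applying \textbf{CSP} to $\tau_n := l_n$: if $l_n/m_n$ stayed bounded below, the case $a_n' = l_n$ of the \textbf{CSP} partner gives an immediate contradiction with $a_n'/b_n' \to 0$, while the case $a_n' \ge m_n$ produces a qualifying component violating consecutiveness as above.

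\emph{Main obstacle.} The delicate step is calibrating the subfamily $\mathcal{L}$: large enough that $\tilde L$ absorbs every tail of $\tilde I_E^d$ (universality) yet narrow enough to enforce $l_n/m_n \to 0$. The two requirements pull in opposite directions, and reconciling them rests on feeding \textbf{CSP} sequences $\tilde\tau$ tethered precisely to the endpoints $l_n$ and $m_n$ of $\tilde L$ itself, translating the \textbf{CSP}-partner into either a contradiction or a witness that no intermediate component has been skipped.
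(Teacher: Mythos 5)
A preliminary remark: Theorem~\ref{ImpTh} is quoted from \cite{BD2} and the present paper contains no proof of it, so there is no internal argument to compare against; I can only assess your proposal on its own terms. Your sufficiency direction is correct and complete, and in fact proves something formally stronger than what is asserted: universality of $\tilde L$ is never used, only $\tilde L\in\tilde I_{E}^{d}$ and $M(\tilde L)<\infty$. The pairing $k(n)=\max\{k: l_k\ge\tau_n\}$, the observation that $\tau_n\in E$ and $\tau_n>l_{k(n)+1}$ force $\tau_n\ge m_{k(n)+1}$, and the resulting chain $\tau_n\le l_{k(n)}\le (M+1)\tau_n$ together with Lemma~\ref{Lem2.3} correctly yield $\tilde\tau\asymp\tilde a$; your verifications that $(l_{k(n)})_{n}$ is almost decreasing and that $\{(l_{k(n)},m_{k(n)})\}_{n}\in\tilde I_{E}^{d}$ are also sound.

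The necessity direction, however, has a genuine gap: the family $\mathcal L$ on which everything rests is never actually defined, and each candidate you sketch fails. Take $y_n=2^{-n!}$ and $E=\{0\}\cup\{y_n,\,2y_n: n\in\mathbb N\}$. Then $E$ is strongly porous at $0$, $0\in ac\,E$, and $E\in\textbf{\emph{CSP}}$: for any $\tilde\tau\in\tilde E_{0}^{d}$ the long gaps $(2y_{n+1},y_n)$ provide the required partner, and $\tilde L=\{(2y_{n+1},y_n)\}_{n\in\mathbb N}$ is universal with $M(\tilde L)=2$. But each short gap $(y_n,2y_n)$ has $l/m=\frac12$, so (a) it passes any fixed smallness test on $l/m$ (and replacing $2y_n$ by $Ky_n$ defeats any other fixed threshold), and (b) it occurs as the $k$-th term of some $\tilde B\in\tilde I_{E}^{d}$ for every $k$ (pad the initial segment with it and continue with the long gaps; all defining conditions of $\tilde I_{E}^{d}$ are asymptotic). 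Hence the short gaps belong to $\mathcal L$ under both of your proposed definitions, and the decreasing enumeration of $\mathcal L$ then contains infinitely many terms with $l_n/m_n=\frac12$, so it violates $\lim_{n\to\infty}\frac{m_n-l_n}{m_n}=1$ and is not even an element of $\tilde I_{E}^{d}$, let alone a universal one with finite $M$. This is precisely the tension you flag as the ``main obstacle,'' but it is not resolved by your $\textbf{\emph{CSP}}$-tethering arguments: in the example the offending gaps survive every test you describe, and I also note that in your verification of $l_n/m_n\to0$ the case $a_n'\ge m_n$ only yields a component with left endpoint \emph{above} $m_n$, which does not contradict consecutiveness. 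The admissibility threshold must shrink with the scale at a rate extracted from $E$ itself; producing that calibration is the actual content of the theorem, and it is missing here.
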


Note that the quantity $M(\tilde L)$ depends from the set $E$ only (for details see \cite{BD2}).
The following lemma is used in next part of the paper.

\begin{lemma}\label{Pr5}\emph{\cite{BD2}}
Let $E\subseteq\mathbb R^{+}$ and let
$\tilde \tau=(\tau_n)_{n\in\mathbb N}\in\tilde E_{0}^{d}.$ Then $E$ is $\tilde\tau$-strongly porous at 0 if and only if there is a constant $k\in (1, \infty)$ such that for every $K\in (k, \infty)$ there exists $N_{1}(K)\in \mathbb N$ with
$ (k\tau_n, K\tau_n)\cap E = \varnothing$ for every $n
\ge N_1(K).$
\end{lemma}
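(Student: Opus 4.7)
The plan is to unwind Definition~\ref{D2*} using Lemma~\ref{Lem2.3}, turning the equivalence $\tilde\tau\asymp(a_n)$ into a pair of concrete inequalities, and to recognise that the two sides of the lemma just record the same containment $(k\tau_n,K\tau_n)\subseteq(a_n,b_n)$ from opposite directions.

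For the necessity direction I would start from a sequence $\{(a_n,b_n)\}\in\tilde I_E^d$ witnessing $\tilde\tau\asymp(a_n)$. Lemma~\ref{Lem2.3} supplies $\tau_n\le a_n$ for large $n$ and $C:=\limsup_{n\to\infty} a_n/\tau_n<\infty$. Choose any $k>\max(1,C)$; then $a_n<k\tau_n$ eventually. The defining property $(b_n-a_n)/b_n\to 1$ means $a_n/b_n\to 0$, which together with $a_n\ge\tau_n$ gives $b_n/\tau_n\to\infty$. Consequently, for every $K>k$ and every sufficiently large $n$ one has $(k\tau_n,K\tau_n)\subseteq(a_n,b_n)\subseteq\mathbb R^{+}\setminus E$, which is exactly the required condition.

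For the sufficiency direction I would fix some $K_0>k$; for $n\ge N_1(K_0)$ the interval $(k\tau_n,K_0\tau_n)$ lies in a unique component of $\mathrm{Ext}\,E$, namely the one containing $k\tau_n$, which I label $(a_n,b_n)$. Letting $K$ range over a sequence $K_j\to\infty$ and using the assumption for each $K_j$ yields $b_n\ge K_j\tau_n$ once $n$ is large enough, so $b_n/\tau_n\to\infty$. On the other hand $a_n\le k\tau_n$, and since $\tau_n\in E$ with $\tau_n<k\tau_n\le b_n$ the point $\tau_n$ cannot lie in $(a_n,b_n)$, forcing $\tau_n\le a_n$. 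Combining $\tau_n\le a_n\le k\tau_n$ and invoking Lemma~\ref{Lem2.3} gives $\tilde\tau\asymp(a_n)$. It remains to verify the membership $\{(a_n,b_n)\}\in\tilde I_E^d$: each $(a_n,b_n)$ is a component of $\mathrm{Ext}\,E$ by construction; $a_n\le k\tau_n\to 0$; $a_n/b_n\le k\tau_n/b_n\to 0$, so $(b_n-a_n)/b_n\to 1$; and almost-decrease of $(a_n)$ follows from that of $(\tau_n)$, because $k\tau_{n+1}\le k\tau_n$ forces the component containing $k\tau_{n+1}$ either to coincide with $(a_n,b_n)$ or to lie entirely to the left of it, so $a_{n+1}\le a_n$.

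The only non-cosmetic point is the small piece of bookkeeping in the sufficiency argument: one must notice that the component picked up for one value of $K$ is the correct one for all larger $K$ (they share $k\tau_n$, and components of $\mathrm{Ext}\,E$ are pairwise disjoint), and that $\tau_n\in E$ together with $k>1$ pushes $\tau_n$ to the left of $a_n$ rather than above $b_n$. Once these observations are in place the rest is direct verification.
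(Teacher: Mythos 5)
Your proof is correct. Note that the paper itself gives no argument for this lemma --- it is imported from \cite{BD2} with only a citation --- so there is no in-paper proof to compare against; your derivation is the natural one, unwinding Definition~\ref{D2*} via Lemma~\ref{Lem2.3}, and both directions check out: the necessity direction correctly extracts $\tau_n\le a_n\le k\tau_n$ eventually and $b_n/\tau_n\to\infty$ from $\tilde\tau\asymp\tilde a$ and $(b_n-a_n)/b_n\to 1$, and the sufficiency direction correctly builds the required element of $\tilde I_E^d(\tilde\tau)$ and verifies all four membership conditions, including the almost-decrease of $(a_n)$ via disjointness of components. Two cosmetic points only: the component you select should be described as the one containing the nonempty open interval $(k\tau_n,K_0\tau_n)$ rather than ``the one containing $k\tau_n$'', since $k\tau_n$ itself may belong to $E$ and then lies in no component of $\mathrm{Ext}\,E$ (your subsequent inequalities $a_n\le k\tau_n$ are consistent with the correct reading, so nothing breaks); and strictly speaking the finitely many indices $n<N_1(K_0)$ need some component assigned to them so that every term of the sequence is a component of $\mathrm{Ext}\,E$, which is trivial since all the remaining conditions in the definition of $\tilde I_E^d$ are asymptotic.
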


Let us consider now a simple set belonging to \emph{\textbf{CSP}}.

\begin{example}\label{ex4.4.7}
Let $(x_n)_{n\in\mathbb N}$ be strictly decreasing sequence of positive real numbers with $\mathop{\lim}\limits_{n\to\infty}\frac{x_{n+1}}{x_n}=0.$ Define a set $W$ as $$W=\{x_n: n\in\mathbb N\},$$ i.e., $W$ is the range of the sequence $(x_n)_{n\in\mathbb N}.$ Then $W\in \textbf{\emph{CSP}}$ and $\tilde L=\{(x_{n+1}, x_n)\}_{n\in\mathbb N}\in\tilde I_{W}^{d}$ is universal with $M(\tilde L)=1.$
\end{example}

\begin{proposition}\label{pr4.4.8}
 Let $E\subseteq\mathbb R^{+}.$ Then the inclusion
\begin{equation}\label{4.4.22}
\{E\cup A: A\in \textbf{CSP}\}\subseteq \textbf{CSP}
\end{equation}
holds if and only if 0 is an isolated point of $E.$
\end{proposition}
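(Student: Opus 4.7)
My plan is to prove the two implications separately.

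For the sufficiency, I would assume $0$ is isolated in $E$ and fix $\delta>0$ with $E\cap(0,\delta)=\varnothing$. Given any $A\in\textbf{\emph{CSP}}$, write $Y:=E\cup A$, and pick an arbitrary $\tilde\tau=(\tau_n)\in\tilde Y_0^d$. The relation $\tau_n\to 0$ will force $\tau_n\in A\setminus\{0\}$ for large $n$, so a tail of $\tilde\tau$ belongs to $\tilde A_0^d$. Lemma~\ref{Pr5} applied to $A$ then yields $k>1$ such that $(k\tau_n,K\tau_n)\cap A=\varnothing$ eventually, for every $K>k$; since, for large $n$, the same interval is contained in $(0,\delta)$, it also misses $E$, and a second application of Lemma~\ref{Pr5} concludes that $Y\in\textbf{\emph{CSP}}$.

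For the necessity, assume $0\in ac E$; the plan is to exhibit a single $A\in\textbf{\emph{CSP}}$ with $E\cup A\notin\textbf{\emph{CSP}}$. First, inductively select a strictly decreasing $(e_n)\subseteq E\setminus\{0\}$ with $e_{n+1}/e_n<n^{-3}$. Next, fix a function $M\colon\mathbb N\to\{2,3,\ldots\}$ with $M(n)\le\sqrt{2n}$ for every $n$ and $M^{-1}(\{j\})$ infinite for every $j\ge 2$; the second coordinate of any Cantor-style bijection $\mathbb N\to\mathbb N\times\{2,3,\ldots\}$ works. Set $A:=\{M(n)e_n:n\in\mathbb N\}$.

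I would then verify two items. The inequality
\[
\frac{M(n+1)e_{n+1}}{M(n)e_n}\le\sqrt{(n+1)/2}\,\,n^{-3}\longrightarrow 0
\]
shows that $(M(n)e_n)$ is eventually strictly decreasing with ratios tending to zero, so rearranging $A$ in decreasing order brings it into the hypotheses of Example~\ref{ex4.4.7}; hence $A\in\textbf{\emph{CSP}}$. Next, taking $\tilde\tau:=(e_n)\in\tilde E_0^d\subseteq\tilde Y_0^d$ with $Y:=E\cup A$, I claim the criterion of Lemma~\ref{Pr5} fails at every $k>1$: choosing an integer $j>k$ and putting $K:=j+1$, the point $je_n=M(n)e_n$ lies in $A\cap(ke_n,Ke_n)$ for each of the infinitely many $n$ with $M(n)=j$. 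Thus $Y$ is not $\tilde\tau$-strongly porous, and $Y\notin\textbf{\emph{CSP}}$.

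The only non-routine step, and the one I expect to require most care, is the construction of $A$: a single $A\in\textbf{\emph{CSP}}$ must obstruct the condition of Lemma~\ref{Pr5} for \emph{every} $k>1$ simultaneously. Packing several fixed multiples $2e_n,\ldots,Ne_n$ into one layer spoils CSP of $A$, while a single multiple $c\,e_n$ obstructs only $k<c$. The fix is the slowly growing multiplier $M(n)$ that nevertheless attains every value infinitely often; combined with a sufficiently sparse $(e_n)$, it keeps $A$ in \textbf{\emph{CSP}} while still meeting every interval $(ke_n,Ke_n)$ along a cofinal subsequence.
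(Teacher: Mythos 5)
Your proof is correct and follows essentially the same route as the paper's: both arguments build the obstructing set by taking a very rapidly decreasing sequence in $E$ and multiplying it by factors that assume each value of an unbounded countable family infinitely often yet vary slowly enough ($M(n)\le\sqrt{2n}$ for you, $2^{-m(n)}$ with $m(n)\le n$ for the paper) that the resulting set lies in \textbf{\emph{CSP}} by Example~\ref{ex4.4.7}, while Lemma~\ref{Pr5} shows the union is not $\tilde\tau$-strongly porous for a suitable $\tilde\tau$ (you perturb upward and test along $(e_n)$, the paper perturbs downward --- a mirror image of the same construction). You additionally write out the sufficiency direction, which the paper dismisses as routine; your argument there via Lemma~\ref{Pr5} is correct.
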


\begin{remark}\label{rem4.4.9}
Inclusion \eqref{4.4.22} means that $E\cup A\in \textbf{\emph{CSP}}$ for every $A\in \textbf{\emph{CSP}}.$
\end{remark}

\emph{Proof of Proposition~\ref{pr4.4.8}.} If $0\notin ac E,$ then \eqref{4.4.22} follows almost directly and we omit the details here. Suppose $0\in ac E.$  Then there is a sequence $(\tau_n)_{n\in\mathbb N}$ such that $\tau_n\in E$ and $\tau_{n+1}\le 2^{-n^{2}}\tau_n$ for every $n\in\mathbb N.$ Let $M_{1}, M_2,..., M_{k},...$ be an infinite partition of $\mathbb N,$ $$\mathop{\cup}\limits_{k=1}^{\infty}M_{k}=\mathbb N, \, M_{i}\cap M_{j}=\varnothing \,\mbox{if} \, i\ne j$$ such that card$M_{k}=$card$\mathbb N$ for every $k$ and $\nu(1)<\nu(2)<...<\nu(k)...$ where \begin{equation}\label{4.4.23}\nu(k)=\mathop{\min}\limits_{n\in M_k}n.\end{equation} Let $n\in\mathbb N$ and let $m(n)$ be the index such that $n\in M_{m(n)}.$ For every $n\in\mathbb N$ define $\tau_n^{*}$ as $2^{-m(n)}\tau_n.$ Write
\begin{equation*}
E_{1}=\{\tau_n: n\in\mathbb N\} \quad\mbox{and}\quad E_{1}^{*}=\{\tau_{n}^{*}: n\in\mathbb N\}.
\end{equation*}
Using Lemma~\ref{Pr5} we can show that $E_{1}\cup E_{1}^{*}$ is not $\tau^{*}$-strongly porous with $\tilde\tau^{*}=(\tau_n)_{n\in\mathbb N}.$ Consequently $E_{1}\cup E_{1}^{*} \notin \textbf{\emph{CSP}}.$ It implies that $E \cup E_{1}^{*} \notin \textbf{\emph{CSP}}$ because $E_{1}\subseteq E.$ To complete the proof, it suffices to show that $E_{1}^{*}\in \emph{\textbf{CSP}}.$ To this end, we note that \eqref{4.4.23} and the inequalities $\nu(1)<\nu(2)<...<\nu(k)...$ imply that $m(n)\le n$ for every $n\in\mathbb N.$ Indeed, if $m(n)=k,$ then we have $$n\ge\nu(k)=(\nu(k)-\nu(k-1))+(\nu(k-1)-\nu(k-2))+...+ (\nu(2)-\nu(1))+\nu(1)$$ $$\ge (k-1)+\nu(1)=k=m(n).$$ Consequently $$\tau_{n}^{*}=2^{-m(n)}\tau_n\ge 2^{-n}\tau_n\ge2^{-n^{2}}\tau_n \ge\tau_{n+1}\ge\tau_{n+1}^{*}$$ for every $n\in\mathbb N.$ It follows from that $$\lim_{n\to\infty}\frac{\tau_{n}^{*}}{\tau_{n+1}^{*}}\ge\lim_{n\to\infty}\frac{2^{-n}\tau_n}{2^{-n^{2}}\tau_n }=\lim_{n\to\infty}2^{n^{2}-n}=+\infty.$$
Thus, as in Example~\ref{ex4.4.7}, we have $E_{1}^{*}\in \textbf{\emph{CSP}}.$ $\qquad\qquad\qquad\qquad\qquad\qquad\square$

\section{Uniform boundedness and \textbf{\emph{CSP}}}

\hspace*{\parindent} Let $\mathfrak F=\{(X_{i}, d_{i}): i \in I\}$ be a nonempty family
of  metric spaces. The family $\mathfrak F$ is \emph{uniformly bounded} if there is a constant
$c>0$ such that the inequality
$
\textrm{diam}X_i <c
$
holds for every $i\in I.$ If all metric spaces $(X_i,d_i)\in\mathfrak
{F}$ are pointed with marked points $p_i\in X_i,$ then the uniform boundedness of
$\mathfrak {F}$ can be described by the next way. Define
\begin{equation}\label{L4.1}
\rho^{*}(X_i):=\sup_{x\in X_i}d_{i}(x,p_i) \quad \mbox{and} \quad R^{*}(\mathfrak {F}):=\sup_{i\in I}\rho^{*}(X_i).
\end{equation}
The family $ \mathfrak {F}$ is uniformly bounded if and only if
$
R^{*}(\mathfrak {F})<\infty.
$

\begin{proposition}\label{Pr4.1}
Let $(X,d,p)$ be a pointed metric space and let $\mathbf{\Omega_{p}^{X}}$ be the set of all pretangent spaces to $X$ at $p.$ The following statements are equivalent.
\newline $\mathrm{(i_1)}$ \textit{The family $\mathbf{\Omega_{p}^{X}}$ is uniformly bounded.}
\newline $\mathrm{(i_2)}$ \textit{The point $p$ is an isolated point of $X$.}
\end{proposition}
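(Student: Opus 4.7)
The plan is to prove both directions directly; the interesting content lives entirely in $(\mathrm{i}_1)\Rightarrow(\mathrm{i}_2)$, which I would handle by contraposition, producing under the failure of isolation a pretangent space of arbitrarily prescribed diameter.

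For $(\mathrm{i}_2)\Rightarrow(\mathrm{i}_1)$ the argument is immediate. Suppose $p$ is isolated, so $d(x,p)\ge\delta$ for some $\delta>0$ and all $x\in X\setminus\{p\}$. For any scaling sequence $\tilde r$ and any $\tilde x=(x_n)\in\tilde X_{p,\tilde r}$, the assumption $\tilde p\in\tilde X_{p,\tilde r}$ forces $d(x_n,p)\to 0$, hence $x_n=p$ eventually. Then $\tilde d_{\tilde r}(\tilde x,\tilde p)=0$, i.e.\ $\pi(\tilde x)=\alpha$. Every pretangent space therefore reduces to a single point and $R^{*}(\mathbf{\Omega_{p}^{X}})=0$.

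For the converse, assume $p\in\mathrm{ac}\,X$ (so that $p$ is not isolated) and fix an arbitrary $C>0$. Choose a sequence $(x_n)$ in $X\setminus\{p\}$ with $x_n\to p$, and set $r_n:=d(x_n,p)/C$. Then $\tilde r=(r_n)$ is a scaling sequence and $\lim_{n\to\infty}d(x_n,p)/r_n=C$, so the two-element family $\{\tilde p,\tilde x\}$ is self-stable w.r.t.\ $\tilde r$. By a Zorn's-Lemma argument identical to the one behind Proposition~\ref{Pr1.2} (applied to the poset of self-stable families containing $\{\tilde p,\tilde x\}$), extend it to a maximal self-stable family $\tilde X_{p,\tilde r}$. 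The corresponding pretangent space $\Omega_{p,\tilde r}^{X}$ contains the two points $\alpha=\pi(\tilde p)$ and $\pi(\tilde x)$ at distance exactly $C$, whence $\rho^{*}(\Omega_{p,\tilde r}^{X})\ge C$. Since $C$ was arbitrary, $R^{*}(\mathbf{\Omega_{p}^{X}})=\infty$ and $\mathbf{\Omega_{p}^{X}}$ is not uniformly bounded.

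There is no real obstacle: the only check that requires a moment's thought is that Zorn applies to extensions of the finite self-stable set $\{\tilde p,\tilde x\}$, which is literally the argument already used to establish Proposition~\ref{Pr1.2}. None of the porosity material from Section 2 or Lemmas~\ref{Lem1.6}--\ref{Lem1.7} is needed here; this proposition is a structural warm-up before the more delicate Theorem concerning normal scaling sequences and \textbf{\emph{CSP}}.
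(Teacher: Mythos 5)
Your proposal is correct and follows essentially the same route as the paper: the forward direction is the same observation that isolation of $p$ forces every class in $\tilde X_{p,\tilde r}$ to collapse onto $\alpha$, and the converse is exactly the paper's construction of the scaling sequence $r_{n}=d(x_n,p)/t$ (your $C$ playing the role of the paper's $t$) together with a Zorn extension to a maximal self-stable family containing $\tilde x$ and $\tilde p$, giving $\operatorname{diam}\Omega_{p,\tilde r_t}^{X}\ge t$. No gaps.
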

\begin{proof}The implication $\mathrm{(i_2)} \Rightarrow \mathrm{(i_1)}$ follows directly from the definitions. To prove $\mathrm{(i_1)} \Rightarrow \mathrm{(i_2)}$ suppose that $p\in ac X.$  Let $\tilde x=(x_n)_{n\in\mathbb N}\in\tilde X$ be a sequence of
distinct points of X such that $\mathop{\lim}\limits_{n\to\infty}d(x_{n},p)~=~0.$ For
$t>0$ define the scaling sequence $\tilde r_{t}=(r_{n,t})_{n\in\mathbb N}$ with
$r_{n,t}=\frac{d(x_n,p)}{t}.$ It follows at once from Definition 1.1 that $\tilde x$ and
$\tilde p$ are mutually stable w.r.t $\tilde r_{t}$ and
\begin{equation}\label{L4.4}
\tilde d_{\tilde r_{t}}(\tilde x, \tilde p)=t.
\end{equation}
Let $\tilde X_{p,\tilde r_{t}}$ be a maximal self-stable family meeting the relation
$\tilde x\in\tilde X_{p,\tilde r_{t}}.$ Equality \eqref{L4.4} implies the inequality
$$\textrm{diam} \, \Omega_{p,\tilde r_{t}}^{X}\ge t,$$ where $\Omega_{p,\tilde r_{t}}^{X}=\pi(\tilde X_{p,\tilde
r_{t}}).$ Consequently the family $\mathbf{\Omega_{p}^{X}}$ is not uniformly bounded.
The implication $\mathrm{(i_1)} \Rightarrow \mathrm{(i_2)}$ follows.
\end{proof}

The proposition above shows that the question on the uniform boundedness can be
informative only for some special subfamilies of $\mathbf{\Omega_{p}^{X}}.$ We can
narrow down the family $\mathbf{\Omega_{p}^{X}}$ by the way of consideration some special
scaling sequences.

\begin{definition}\label{D4.2}
Let $(X,d,p)$ be a pointed metric space and let $p\in ac X.$ A
scaling sequence $(r_{n})_{n\in\mathbb N}$ is normal if there is $
(x_{n})_{n\in\mathbb N}\in\tilde X$ such that the sequence $(d(x_n,
p))_{n\in\mathbb N}$ is almost decreasing and
\begin{equation}\label{L4.4*}
\lim_{n\to\infty}\frac{d(x_n,p)}{r_{n}}=1.
\end{equation}
\end{definition}

\begin{proposition}\label{prop}The following properties take place for every pointed metric space $(X,d,p).$
\newline $(i_1)$ If $\Omega_{p,\tilde r}^{X}$ contains at least two distinct points, then there are $c>0$ and a subsequence $(r_{n_k})_{k\in\mathbb N}$ of $\tilde r$ so that the sequence $(cr_{n_k})_{k\in\mathbb N}$ is normal.
\newline $(i_2)$ If $(x_n)_{n\in\mathbb N}\in\tilde X$ and \eqref{L4.4*} holds, then there is an infinite increasing sequence $(n_k)_{k\in\mathbb N}$ so that $(d(x_{n_k}, p))_{k\in\mathbb N}$ is decreasing.
\newline $(i_3)$ If $\tilde r$ is a normal scaling sequence,then there is $(x_n)_{n\in\mathbb N}\in\tilde X$ such that \eqref{L4.4*} holds and $(d(x_n, p))_{n\in\mathbb N}$ is almost decreasing decreasing.
\end{proposition}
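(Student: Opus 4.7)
My plan is to prove the three parts in the order $(i_2),(i_1),(i_3)$, since $(i_1)$ reduces to $(i_2)$ after a simple rescaling, while $(i_3)$ is a minor strengthening of Definition~\ref{D4.2} that uses only an elementary surgery on the first finitely many terms.

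For $(i_2)$, the hypothesis \eqref{L4.4*} together with $r_n>0$ and $r_n\to 0$ gives at once two facts: $d(x_n,p)>0$ for all sufficiently large $n$, and $d(x_n,p)\to 0$. A greedy induction then produces the desired subsequence: choose $n_1$ with $d(x_{n_1},p)>0$ and, given $n_k$, pick $n_{k+1}>n_k$ with $d(x_{n_{k+1}},p)<d(x_{n_k},p)$. Such an index always exists because $d(x_m,p)\to 0$ while $d(x_{n_k},p)$ is a fixed positive number.

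For $(i_1)$, the assumption together with the identification \eqref{eq1.4} furnishes a sequence $\tilde y=(y_n)_{n\in\mathbb N}\in\tilde X_{p,\tilde r}$ whose equivalence class differs from $\pi(\tilde p)$, so that $c:=\tilde d(\tilde y,\tilde p)=\lim_{n\to\infty}d(y_n,p)/r_n>0$. Rewriting this as $\lim_{n\to\infty}d(y_n,p)/(cr_n)=1$, I apply $(i_2)$ to the pair $((y_n),(cr_n))$ to extract a strictly increasing $(n_k)_{k\in\mathbb N}$ along which $(d(y_{n_k},p))_k$ is strictly decreasing. Then $(cr_{n_k})_k$ meets Definition~\ref{D4.2} with witness $(y_{n_k})_k$, which is exactly the required normality.

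For $(i_3)$, let $(x_n)$ be a witness for the normality of $\tilde r$ supplied by Definition~\ref{D4.2}, so that $d(x_{n+1},p)\le d(x_n,p)$ for all $n\ge N$ with some $N\in\mathbb N$. Setting $x_n':=x_{\max(n,N)}$ defines a sequence that differs from $(x_n)$ at only finitely many indices, so \eqref{L4.4*} is preserved; moreover $(d(x_n',p))_{n\in\mathbb N}$ is now non-increasing on all of~$\mathbb N$.

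The only subtle step is the opening move of $(i_1)$: to convert the hypothesis ``at least two distinct points in $\Omega_{p,\tilde r}^{X}$'' into the concrete existence of $\tilde y$ with $\tilde d(\tilde y,\tilde p)>0$, one must remember that $\pi(\tilde p)$ is the distinguished point of $\Omega_{p,\tilde r}^{X}$ and invoke \eqref{eq1.4} to pass from the metric $\rho$ back to the pseudometric $\tilde d$ on $\tilde X_{p,\tilde r}$. Everything else is bookkeeping.
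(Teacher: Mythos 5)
Your proof is correct, and for $(i_1)$ and $(i_2)$ it supplies exactly the routine details the paper dismisses with ``it is easily verified'': the greedy extraction of a strictly decreasing subsequence of $(d(x_n,p))_{n\in\mathbb N}$ (using $d(x_n,p)\to 0$ and $d(x_n,p)>0$ eventually), and the observation that any class $\beta\ne\pi(\tilde p)$ gives $c=\tilde d(\tilde y,\tilde p)>0$, after which $(i_2)$ applied to the scaling sequence $(cr_n)_{n\in\mathbb N}$ yields normality of $(cr_{n_k})_{k\in\mathbb N}$. Where you genuinely diverge from the paper is $(i_3)$. The paper starts from a witness $(y_n)_{n\in\mathbb N}$ of \eqref{L4.4*} with $d(y_n,p)>0$, passes to the running minimum $d(y_{m(n)},p)=\min_{1\le i\le n}d(y_i,p)$, and squeezes $\frac{d(y_{m(n)},p)}{r_{m(n)}}\le\frac{d(y_{m(n)},p)}{r_n}\le\frac{d(y_n,p)}{r_n}$ to show the new sequence still satisfies \eqref{L4.4*}; that squeeze needs $r_{m(n)}\ge r_n$, i.e.\ it silently assumes $\tilde r$ itself is almost decreasing, which is not part of the definition of a scaling sequence. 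You instead take the almost-decreasing witness already guaranteed by Definition~\ref{D4.2} and overwrite its first finitely many terms, which is shorter, preserves the limit, and avoids that extra assumption. The trade-off is that your argument leans entirely on the monotonicity clause built into Definition~\ref{D4.2}: it proves $(i_3)$ as literally stated, but it would give nothing if $(i_3)$ were intended (as the paper's own proof suggests) to manufacture a monotone witness from an \emph{arbitrary} witness of \eqref{L4.4*}. That is worth flagging as a difference in scope, not a gap in what you were asked to prove.
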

\begin{proof}
It is easily verified that $(i_1)$ and $(i_2)$ hold. To verify $(i_3)$ observe that there is $(y_n)_{n\in\mathbb N}\in\tilde X$ which satisfies $d(y_n, p)>0$ for every $n\in\mathbb N$ and \eqref{L4.4*} with $(x_n)_{n\in\mathbb N}=(y_n)_{n\in\mathbb N}.$ Let $m(n)\in\mathbb N$ meet the conditions $m(n)\le n$ and $d(y_{m(n},p)=\mathop{\min}\limits_{1\le i \le n}d(y_i, p).$ Since $\tilde r$ is almost decreasing we have $$\frac{d(y_{m(n)},p)}{r_{m(n)}}\le\frac{d(y_{m(n)},p)}{r_n}\le\frac{d(y_{n},p)}{r_n}.$$ The conditions $\mathop{\lim}\limits_{n\to\infty}y_n=p$ and $d(y_n, p)>0$ for $n\in\mathbb N$ imply that $m(n)\to\infty$ as $n\to\infty.$ Consequently $$1=\lim_{n\to\infty}\frac{d(y_{m(n)},p)}{r_{m(n)}}=\lim_{n\to\infty}\frac{d(y_{m(n)},p)}{r_{n}}=\lim_{n\to\infty}\frac{d(y_{n},p)}{r_{n}}.$$ Thus $(i_3)$ holds with $(x_n)_{n\in\mathbb N}=(y_{m(n)})_{n\in\mathbb N}.$
\end{proof}

Write $\mathbf{\Omega_{p}^{X}(n)}$ for the
set of pretangent spaces $\Omega_{p,\tilde r}^{X}$ with normal scaling sequences.
Under what conditions the family $\mathbf{\Omega_{p}^{X}(n)}$ is uniformly bounded?

\begin{remark}\label{Rem4.3}
Of course, the property of scaling sequence $\tilde r$ to be normal depends on the
underlaying space $(X,d,p)$. Nevertheless for every pointed metric space $(X,d,p)$ a
scaling sequence $\tilde r$ is normal for this space if and only if it
is normal for the space $(S_{p}(X), |\cdot |, 0)$. We shall use this simple fact
below in Proposition~\ref{Pr4.4}.
\end{remark}

In the next proposition we define $\mathbf{\Omega_{0}^{E}(n)}$ to be the set of all pretangent spaces to the
distance set $E=S_p (X)$ at 0 w.r.t. normal scaling sequences.
\begin{proposition}\label{Pr4.4} Let $(X,d,p)$ be a pointed metric space and let $E=S_p (X).$
Then we have
\begin{equation}\label{L4.5}
R^{*}(\mathbf{\Omega_{0}^{E}(n)})=R^{*}(\mathbf{\Omega_{p}^{X}(n)})
\end{equation}
where
$R^{*}(\mathbf{\Omega_{p}^{X}(n)})$ and $R^{*}(\mathbf{\Omega_{0}^{E}(n)})$ are defined by \eqref{L4.1}
with $\mathfrak {F}=\mathbf{\Omega_{p}^{X}(n)}$ and $\mathfrak {F}=\mathbf{\Omega_{0}^{E}(n)}$ respectively.
\end{proposition}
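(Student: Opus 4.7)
The plan is to prove equality \eqref{L4.5} by establishing both inequalities
$R^{*}(\mathbf{\Omega_{p}^{X}(n)})\le R^{*}(\mathbf{\Omega_{0}^{E}(n)})$ and
$R^{*}(\mathbf{\Omega_{0}^{E}(n)})\le R^{*}(\mathbf{\Omega_{p}^{X}(n)})$, using the
fact from Remark~\ref{Rem4.3} that a scaling sequence $\tilde r$ is normal
for $(X,d,p)$ if and only if it is normal for $(E,|\cdot|,0).$ In both directions the
argument will go through single sequences: by definition
$\rho^{*}(\Omega_{p,\tilde r}^{X})=\sup_{\tilde x\in\tilde X_{p,\tilde r}}
\lim_{n\to\infty}\frac{d(x_n,p)}{r_n},$ and the analogous sup formula holds for
$\rho^{*}(\Omega_{0,\tilde r}^{E}),$ so it suffices to match each individual
``distance-from-marked-point'' value on one side with an element of some pretangent
space on the other side.

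For the inequality $R^{*}(\mathbf{\Omega_{p}^{X}(n)})\le R^{*}(\mathbf{\Omega_{0}^{E}(n)})$
I fix an arbitrary $\Omega_{p,\tilde r}^{X}\in\mathbf{\Omega_{p}^{X}(n)}$ and an
arbitrary $\tilde x=(x_n)\in\tilde X_{p,\tilde r}$. The sequence $\tilde s:=(d(x_n,p))$
lies in $\tilde E$, tends to $0$, and is mutually stable with the constant sequence
$\tilde 0=(0,0,\ldots)$ w.r.t.~$\tilde r$ with $\tilde d_{\tilde r}(\tilde s,\tilde 0)=
\lim_{n\to\infty} d(x_n,p)/r_n.$ Zorn's lemma (as in Proposition~\ref{Pr1.2}) extends
$\{\tilde 0,\tilde s\}$ to a maximal self-stable family in $\tilde E$, yielding a
pretangent space $\Omega_{0,\tilde r}^{E}$, which belongs to $\mathbf{\Omega_{0}^{E}(n)}$
by Remark~\ref{Rem4.3}; and by construction
$\rho^{*}(\Omega_{0,\tilde r}^{E})\ge\lim_{n\to\infty} d(x_n,p)/r_n.$ Taking supremum
over $\tilde x$ and then over $\Omega_{p,\tilde r}^{X}$ gives the inequality.

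The reverse inequality is symmetric: given $\Omega_{0,\tilde r}^{E}\in
\mathbf{\Omega_{0}^{E}(n)}$ and $\tilde t=(t_n)\in\tilde E_{0,\tilde r}$, for each $n$
the value $t_n\in E$ can be written as $t_n=d(y_n,p)$ for some $y_n\in X$.
The resulting $\tilde y=(y_n)\in\tilde X$ satisfies $d(y_n,p)=t_n\to 0,$ so
$\tilde y$ is mutually stable with $\tilde p$ and
$\tilde d_{\tilde r}(\tilde y,\tilde p)=\lim_{n\to\infty} t_n/r_n.$
Extending $\{\tilde p,\tilde y\}$ to a maximal self-stable family by Zorn's lemma
produces some $\Omega_{p,\tilde r}^{X}\in\mathbf{\Omega_{p}^{X}(n)}$ (normality of
$\tilde r$ transfers by Remark~\ref{Rem4.3}) with $\rho^{*}(\Omega_{p,\tilde r}^{X})
\ge\lim_{n\to\infty} t_n/r_n$, and two successive suprema complete the proof. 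There is
no real obstacle; the only point requiring a moment of care is the inter-conversion of
sequences between $\tilde X$ and $\tilde E$ through the preimage of a distance, which
is straightforward because the distance set $E=S_p(X)$ is by definition the range of
$x\mapsto d(x,p)$.
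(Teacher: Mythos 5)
Your proposal is correct and follows essentially the same route as the paper's proof: transferring sequences between $\tilde X$ and $\tilde E$ via $x\mapsto d(x,p)$ in one direction and via preimages of distances in the other, extending to maximal self-stable families, and taking suprema. The only omission is the trivial case $p\notin ac X$, which the paper dispatches first (both families are then empty), but this does not affect the substance of the argument.
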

\begin{proof}
If $p\notin ac X$, then the set of normal scaling sequences is
empty. Consequently we have $\mathbf{\Omega_{0}^{E}(n)}=\mathbf{\Omega_{p}^{X}(n)}=\varnothing,$ so we suppose that $p\in ac X.$

 For each normal scaling sequence $\tilde r$ and every $\tilde x \in
\tilde X$ having the finite limit $\mathop{\lim}\limits_{n\to\infty}\frac{d(x_n,p)}{r_n}$ we
can find $(s_n)_{n\in\mathbb N}\in\tilde E$ such that
\begin{equation}\label{**}\lim_{n\to\infty}\frac{d(x_n,p)}{r_n}=\lim_{n\to\infty}\frac{s_n}{r_n}.\end{equation}
Hence the inequality
\begin{equation}\label{L4.7}
\rho(\alpha, \beta)\le R^{*}(\mathbf{\Omega_{0}^{E}(n)})
\end{equation}
holds with $\alpha=\pi(\tilde p)$ for every $\beta\in\Omega_{p,\tilde r}^{X}$ and every
$\Omega_{p,\tilde r}^{X}\in\mathbf{\Omega_{p}^{X}(n)}.$ Taking supremum over all $\Omega_{p,\tilde r}^{X}\in\mathbf{\Omega_{p}^{X}(n)}$ and $\beta\in\Omega_{p,\tilde r}^{X},$ we get
\begin{equation}\label{L4.6} R^{*}(\mathbf{\Omega_{0}^{E}(n)})\ge R^{*}(\mathbf{\Omega_{p}^{X}(n)}).\end{equation} It still remains to prove the inequality
\begin{equation}\label{L4.8}
 R^{*}(\mathbf{\Omega_{0}^{E}(n)})\le R^{*}(\mathbf{\Omega_{p}^{X}(n)}).
\end{equation}
As is easily seen, for every normal scaling $\tilde r$ and every $\tilde s\in
\tilde E$ with $\mathop{\lim}\limits_{n\to\infty}\frac{s_n}{r_n}<\infty,$ there is $\tilde x\in\tilde X$ satisfying \eqref{**}. Now reasoning as in the proof of \eqref{L4.6} we obtain \eqref{L4.8}.
Equality \eqref{L4.5} follows from \eqref{L4.6} and
\eqref{L4.8}.
\end{proof}

\begin{lemma}\label{lem4.5}
Let $E\subseteq\mathbb R^{+}$ and let $0\in ac E.$ If the inequality
\begin{equation}\label{L4.9}
R^{*}(\mathbf{\Omega_{0}^{E}(n)})<\infty
\end{equation} holds, then $E\in \textbf{CSP}$.
\end{lemma}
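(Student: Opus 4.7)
The plan is to argue by contradiction: I will assume $E \notin \textbf{CSP}$ and build a pretangent space in $\mathbf{\Omega_0^E(n)}$ whose diameter exceeds any prescribed $N$, contradicting \eqref{L4.9}.

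If $E \notin \textbf{CSP}$, then by Definition~\ref{D2*} there exists $\tilde\tau = (\tau_n)_{n\in\mathbb N} \in \tilde E_0^d$ for which $E$ fails to be $\tilde\tau$-strongly porous at $0$. Negating the criterion in Lemma~\ref{Pr5}: for every $k \in (1, \infty)$ there exists $K > k$ such that the intersection $(k\tau_n, K\tau_n) \cap E$ is nonempty for infinitely many indices $n$. Fix an arbitrary $N > 1$ and apply this with $k := N$ to obtain $K_N > N$, an infinite increasing sequence $(n_j)_{j\in\mathbb N}$, and points $s_j \in E \cap (N\tau_{n_j}, K_N \tau_{n_j})$.

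Passing to a further subsequence (and relabeling), I may assume $\lim_{j\to\infty} s_j/\tau_{n_j} = t$ exists in $[N, K_N]$. Set $r_j := \tau_{n_j}$; then $\tilde r = (r_j)_{j\in\mathbb N}$ is a subsequence of $\tilde\tau$, hence almost decreasing, tends to $0$, and takes values in $E\setminus\{0\}$. Taking the witness sequence $(y_j) := (r_j) = (\tau_{n_j}) \in \tilde E$, we have $|y_j - 0|/r_j \equiv 1$ and $(|y_j - 0|)_j$ is almost decreasing, so $\tilde r$ is a normal scaling sequence for $(E, |\cdot|, 0)$ in the sense of Definition~\ref{D4.2}. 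Moreover $\tilde s := (s_j)_{j\in\mathbb N} \in \tilde E$ satisfies $|s_j - 0|/r_j \to t$, so $\tilde s$ and $\tilde p = (0,0,\ldots)$ are mutually stable w.r.t. $\tilde r$ with $\tilde d_{\tilde r}(\tilde s, \tilde p) = t$.

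The pair $\{\tilde p, \tilde s\}$ is then self-stable, so a standard application of Zorn's lemma (as in Proposition~\ref{Pr1.2}) furnishes a maximal self-stable family $\tilde E_{0, \tilde r} \supseteq \{\tilde p, \tilde s\}$. The associated pretangent space $\Omega_{0, \tilde r}^{E}$ belongs to $\mathbf{\Omega_0^E(n)}$ and contains $\alpha = \pi(\tilde p)$ together with $\beta = \pi(\tilde s)$ at distance $\rho(\alpha, \beta) = t \geq N$. Therefore $R^*(\mathbf{\Omega_0^E(n)}) \geq \rho^*(\Omega_{0, \tilde r}^E) \geq N$. Since $N$ was arbitrary, this forces $R^*(\mathbf{\Omega_0^E(n)}) = \infty$, contradicting \eqref{L4.9}. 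The main subtlety is the correct handling of the nested quantifier in the negation of Lemma~\ref{Pr5} — one must resist picking $K$ first and instead exploit the freedom to choose $k$ arbitrarily large, since it is $k$ (not $K$) that provides the lower bound on the pretangent distance $t$.
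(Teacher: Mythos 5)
Your proof is correct and follows essentially the same route as the paper: negate Lemma~\ref{Pr5}, extract a subsequence of $\tilde\tau$ as a normal scaling sequence, and use the points of $E$ in the nonempty gaps $(k\tau_{n_j},K\tau_{n_j})$ to produce an element of a pretangent space far from $\alpha=\pi(\tilde 0)$. The only cosmetic difference is that the paper fixes $k=2R^{*}(\mathbf{\Omega_{0}^{E}(n)})$ once and derives the single contradiction $R^{*}\ge 2R^{*}$, whereas you let $k=N$ range over all values and conclude $R^{*}=\infty$; both hinge on the same quantifier order in Lemma~\ref{Pr5} that you correctly flag.
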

\begin{proof}
Suppose that \eqref{L4.9} holds but there is $\tilde \tau=(\tau_n)_{n\in\mathbb
N}\in\tilde E_0^{d}$ such that $E$ is not $\tilde\tau$-strongly porous at 0. Then, by
Lemma~\ref{Pr5}, for every $k>1$ there is $K\in(k,\infty)$ such that
$
(k\tau_n, K\tau_n)\cap E\ne\varnothing
$ for all $n$ belonging to an infinite set $A\subseteq\mathbb N.$ Let us put \begin{equation}\label{L4.11}
k=2R^{*}(\mathbf{\Omega_{0}^{E}(n)}).
\end{equation}It simply follows from \eqref{L4.1} and Definition~\ref{D4.2} that $R^{*}(\mathbf{\Omega_{0}^{E}(n)})\ge
1.$ Thus $k\ge 2.$ Consequently we can find $K\in(k, \infty)$ and an infinite set
$A=\{n_1,...,n_j, ...\}\subseteq\mathbb N,$ such that for every $n_j\in A$ there
is $x_j \in E$ satisfying the double inequality
\begin{equation}\label{L4.12}
k<\frac{x_j}{\tau_{n_j}}<K.
\end{equation}Thus the sequence $\left(\frac{x_j}{\tau_{n_j}}\right)_{j\in\mathbb N}$ is bounded.
Hence it contains a convergent subsequence.
Passing to this subsequence we obtain
\begin{equation}\label{L4.13}
\lim_{j\to\infty}\frac{x_j}{\tau_{n_j}}<\infty.
\end{equation}
Now \eqref{L4.11} and \eqref{L4.12} imply
\begin{equation}\label{L4.14}
\lim_{j\to\infty}\frac{x_j}{\tau_{n_j}}=\lim_{j\to\infty}\frac{|0-x_j|}{\tau_{n_j}}\ge
2R^{*}(\mathbf{\Omega_{0}^{E}(n)}).
\end{equation}
The scaling sequence $\tilde r=(r_j)_{j\in\mathbb N}$ with $r_j=\tau_{n_j},$
$j\in\mathbb N,$ is normal. The existence of
finite limit \eqref{L4.13} implies that $\tilde x=(x_j)_{j\in\mathbb N}$ and $\tilde
0$ are mutually stable w.r.t $\tilde r.$ Consequently there is a maximal self-stable
family $\tilde E_{0, \tilde r}$ such that $\tilde x, \tilde 0\in\tilde E_{0, \tilde r}.$ Write $\Omega_{0, \tilde
r}^{E}$ for the metric identification of $\tilde E_{0, \tilde r}$ and $\alpha$ for the
natural projection of $\tilde 0.$ Using \eqref{L4.14} and \eqref{L4.1}, we obtain
$$R^{*}(\mathbf{\Omega_{0}^{E}(n)})\ge\sup_{\gamma\in\Omega_{0,\tilde r}^{E}}\rho(\alpha, \gamma)\ge
2R^{*}(\mathbf{\Omega_{0}^{E}(n)}).$$ The last double inequality is inconsistent because
$1\le R^{*}(\mathbf{\Omega_{0}^{E}(n)})< \infty.$ Thus if \eqref{L4.9} holds, then $E$ is
$\tilde \tau$-strongly porous at 0, as required.
\end{proof}

Let $\tilde \tau\in\tilde E_{0}^{d}.$ Define a subset $\tilde I_{E}^{d} (\tilde \tau)$ of
the set $\tilde I_E^{d}$ by the rule:
$$(\{(a_n ,b_n)\}_{n\in\mathbb N}\in\tilde I_{E}^{d}(\tilde \tau))\Leftrightarrow (\{(a_n ,b_n)\}_{n\in\mathbb N}\in\tilde I_{E}^{d}\, \mbox{and}\, $$ $$\tau_n\le a_n \, \mbox{for sufficiently large}\, n\in\mathbb N).$$
Write
\begin{equation}\label{L9}
C(\tilde \tau):=\inf(\limsup_{n\to\infty}\frac{a_n}{\tau_n})\quad \mbox{and}\quad
C_E:=\sup_{\tilde \tau\in\tilde E_{0}^{d}}C(\tilde\tau)
\end{equation} where the infimum is taken over all $\{(a_n ,b_n)\}_{n\in\mathbb N}\in\tilde I_{E}^{d}(\tilde \tau).$

Using Theorem~\ref{ImpTh} we can prove the following

\begin{proposition}\label{P2}\emph{\cite{BD2}}
Let $E\subseteq\mathbb R^{+}$ and $\tilde\tau\in \tilde E_{0}^{d}.$ The set $E$ is $\tilde\tau$-strongly porous at 0 if and only if $C(\tilde\tau)<\infty.$  The membership $E\in \textbf{CSP}$ holds if and only if $C_E <\infty.$
\end{proposition}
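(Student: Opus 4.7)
The plan is to dispatch the two equivalences using Lemma~\ref{Lem2.3} and Theorem~\ref{ImpTh}. The first equivalence is essentially a direct translation: by Definition~\ref{D2*} and Lemma~\ref{Lem2.3}, $E$ is $\tilde\tau$-strongly porous at $0$ iff there exists $\{(a_n,b_n)\}\in\tilde I_E^d$ with $\tau_n\le a_n$ for sufficiently large $n$ and $\limsup_{n\to\infty}a_n/\tau_n<\infty$; the first condition says exactly $\{(a_n,b_n)\}\in\tilde I_E^d(\tilde\tau)$, so the existence of such a sequence coincides with $C(\tilde\tau)<\infty$.

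One direction of the second equivalence is immediate from the first: if $C_E<\infty$, then $C(\tilde\tau)\le C_E<\infty$ for every $\tilde\tau\in\tilde E_0^d$, yielding $\tilde\tau$-strong porosity and hence $E\in\textbf{\emph{CSP}}$. The substance lies in the converse $E\in\textbf{\emph{CSP}}\Rightarrow C_E<\infty$, where the pointwise finiteness of each $C(\tilde\tau)$ must be upgraded to a uniform bound. We may assume $0\in ac E$ (otherwise $\tilde E_0^d=\varnothing$ and $C_E$ is vacuously finite); then $E$ is strongly porous at $0$, so Theorem~\ref{ImpTh} delivers a universal $\tilde L=\{(l_k,m_k)\}\in\tilde I_E^d$ with $M(\tilde L)<\infty$. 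The crucial claim, which I would isolate as a lemma, is $C(\tilde\tau)\le M(\tilde L)$ for every $\tilde\tau\in\tilde E_0^d$; taking the supremum in $\tilde\tau$ then gives $C_E\le M(\tilde L)<\infty$.

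To prove the claim, given $\tilde\tau\in\tilde E_0^d$ I define, for all sufficiently large $n$, the index $k(n):=\max\{k:l_k\ge\tau_n\}$ (well-defined because $l_k\to 0$ and $(l_k)$ is almost decreasing) and set $(a_n,b_n):=(l_{k(n)},m_{k(n)})$. Then $\tau_n\le a_n$ by construction, and the combined conditions $\tau_n\in E$ and $\tau_n>l_{k(n)+1}$ (from maximality of $k(n)$) force $\tau_n\ge m_{k(n)+1}$, so
\begin{equation*}
\frac{a_n}{\tau_n}=\frac{l_{k(n)}}{\tau_n}\le\frac{l_{k(n)}}{m_{k(n)+1}}
\end{equation*}
and $\limsup_{n\to\infty}a_n/\tau_n\le M(\tilde L)$. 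The remaining checks that $\{(a_n,b_n)\}\in\tilde I_E^d(\tilde\tau)$ are routine: $(a_n)$ is almost decreasing because $k(n)$ is non-decreasing in $n$; $a_n\to 0$ because $k(n)\to\infty$; and $(b_n-a_n)/b_n\to 1$ because $l_k/m_k\to 0$ as $k\to\infty$, from $\tilde L\in\tilde I_E^d$. The principal obstacle is conceptual rather than computational: one must recognize that a single reference sequence $\tilde L$ is needed to dominate every normalizing $\tilde\tau$ simultaneously, which is precisely what the universality clause in Theorem~\ref{ImpTh} provides.
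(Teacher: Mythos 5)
Your argument is correct. Note first that the paper itself does not prove Proposition~\ref{P2}: it is imported from \cite{BD2}, with only the remark that Theorem~\ref{ImpTh} is the tool, so there is no in-text proof to match line by line. Your route is the intended one. The first equivalence is, as you say, a direct unwinding of Definition~\ref{D2*} through Lemma~\ref{Lem2.3} (with the tacit convention that the infimum over an empty $\tilde I_E^d(\tilde\tau)$ is $+\infty$). For the substantive implication $E\in\textbf{\emph{CSP}}\Rightarrow C_E<\infty$, your key claim $C(\tilde\tau)\le M(\tilde L)$ for all $\tilde\tau\in\tilde E_0^d$ is precisely one half of Lemma~\ref{Lem2.15} (which asserts $M(\tilde L)=C_E$ for universal $\tilde L$ when $E\in\textbf{\emph{CSP}}$); citing that lemma would have shortened the argument, but your self-contained derivation --- selecting $(a_n,b_n):=(l_{k(n)},m_{k(n)})$ with $k(n)=\max\{k:l_k\ge\tau_n\}$ and using $\tau_n\in E$ together with $\tau_n>l_{k(n)+1}$ to force $\tau_n\ge m_{k(n)+1}$ --- is sound and avoids any worry of circularity between the two statements. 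Two small points: the sequence $\{(a_n,b_n)\}$ is defined only for large $n$ and should be padded (say by $(l_1,m_1)$) to get a genuine element of $\tilde I_E^d(\tilde\tau)$, which is harmless since all the relevant conditions except membership of each interval in the components of $Ext\,E$ are asymptotic; and your closing remark slightly misidentifies what is needed, since the universality of $\tilde L$ (that $\tilde B\preceq\tilde L$ for every $\tilde B\in\tilde I_E^d$) is never invoked in your computation --- the argument uses only the existence of a single element of $\tilde I_E^d$ with finite $M$, which is what Theorem~\ref{ImpTh} supplies.
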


\begin{remark}\label{R1}
If $E\subseteq\mathbb R^{+}, \, i=1,2, \,  \{(a_{n}^{(i)}, b_{n}^{(i)})\}_{n\in\mathbb N}\in\tilde I_{E}^{d}$ and $\tilde a^{1}\asymp \tilde a^{2}$ where $\tilde a^{i}=(a_{n}^{(i)})_{n\in\mathbb N}, \, $ then there is $n_0 \in\mathbb N$ such that $(a_{n}^{(1)}, b_{n}^{(1)})=(a_{n}^{(2)}, b_{n}^{(2)})$ for every $n\ge n_0.$ Consequently if $E$ is $\tilde\tau$-strongly porous and $\{(a_n, b_n)\}_{n\in\mathbb N}\in\tilde I_{E}^{d}(\tilde\tau),$ then we have
\begin{equation*}\label{z1}
\mbox{either}\quad
\limsup_{n\to\infty}\frac{a_n}{\tau_n}=\infty
\quad\mbox{or}\quad \limsup_{n\to\infty}\frac{a_n}{\tau_n}=C(\tilde\tau)<\infty.\end{equation*}
\end{remark}

\begin{lemma}\emph{\cite{BD2}}\label{Lem2.15}
Let $E\in \textbf{CSP}.$ If $\tilde L\in\tilde
I_{E}^{d}$ is universal,
then
$
M(\tilde L)=C_E
$
where the quantities $M(\tilde L)$ and $C_E$ are defined by \eqref{L13} and \eqref{L9}
respectively.
\end{lemma}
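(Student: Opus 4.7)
The plan is to prove the equality by establishing the two inequalities $C_E\le M(\tilde L)$ and $C_E\ge M(\tilde L)$. The key structural observation, used in both directions, is that universality of $\tilde L=\{(l_n,m_n)\}_{n\in\mathbb N}$ forces, beyond some index, $(l_n)$ to be strictly decreasing to $0$ and $\tilde L$ to list \emph{all} sufficiently small connected components of $\mathrm{Ext}\,E$; otherwise, inserting a missed small component into $\tilde L$ would yield some $\tilde B\in\tilde I_E^d$ with $\tilde B\not\preceq\tilde L$. Consequently no $l_k$ lies in $(m_{n+1},l_n)$ for large $n$. Also note that by Theorem~\ref{ImpTh} and Proposition~\ref{P2} both $M(\tilde L)$ and $C_E$ are finite.

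For $C_E\le M(\tilde L)$, I fix $\tilde\tau=(\tau_n)\in\tilde E_0^d$ and for sufficiently large $n$ take $k(n)$ to be the unique index with $l_{k(n)+1}<\tau_n\le l_{k(n)}$. Since $\tau_n\in E$ and $(l_{k(n)+1},m_{k(n)+1})\cap E=\varnothing$, one obtains $\tau_n\ge m_{k(n)+1}$. Setting $(a_n,b_n):=(l_{k(n)},m_{k(n)})$, the almost decreasing character of $\tilde\tau$ forces $k(n)$ to be eventually nondecreasing, and the conditions for $\{(a_n,b_n)\}\in\tilde I_E^d(\tilde\tau)$ (including $a_n/b_n\to 0$ from $l_k/m_k\to 0$, and $\tau_n\le a_n$) are straightforward to verify. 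The estimate
\[
\frac{a_n}{\tau_n}=\frac{l_{k(n)}}{\tau_n}\le\frac{l_{k(n)}}{m_{k(n)+1}}
\]
then gives $\limsup_n a_n/\tau_n\le M(\tilde L)$, so $C(\tilde\tau)\le M(\tilde L)$, and taking the supremum over $\tilde\tau$ yields $C_E\le M(\tilde L)$.

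For the reverse inequality $C_E\ge M(\tilde L)$, I select an increasing $(n_j)_{j\in\mathbb N}$ with $l_{n_j}/m_{n_j+1}\to M(\tilde L)$. Because extending $(l_{n_j+1},m_{n_j+1})$ to the right must meet $E$, points of $E$ accumulate at $m_{n_j+1}$ from above (or $m_{n_j+1}\in E$), so I pick $\tau_j\in E$ with $m_{n_j+1}\le\tau_j<l_{n_j}$ and $\tau_j/m_{n_j+1}\to 1$. After thinning $(n_j)$ if needed, $(\tau_j)$ is strictly decreasing to $0$, hence $\tilde\tau\in\tilde E_0^d$. For any $\tilde L'=\{(a_j,b_j)\}\in\tilde I_E^d(\tilde\tau)$, universality gives $a_j=l_{g(j)}$ for large $j$; combined with $a_j\ge\tau_j>m_{n_j+1}$ and the structural observation that no $l_k$ lies in $(m_{n_j+1},l_{n_j})$, this forces $l_{g(j)}\ge l_{n_j}$. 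Therefore
\[
\frac{a_j}{\tau_j}\ge\frac{l_{n_j}}{\tau_j}\longrightarrow M(\tilde L),
\]
so $\limsup_j a_j/\tau_j\ge M(\tilde L)$; taking the infimum over $\tilde L'$ gives $C(\tilde\tau)\ge M(\tilde L)$, and hence $C_E\ge M(\tilde L)$.

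The technical heart is the structural claim that universality makes $\tilde L$ the full ordered enumeration of all sufficiently small components of $\mathrm{Ext}\,E$; once that is in hand, the rest is routine bookkeeping — verifying the $\tilde I_E^d(\tilde\tau)$ axioms for the sequence constructed in the first half, and arranging the perturbation $\tau_j\approx m_{n_j+1}$ with the proper monotonicity and $E$-membership in the second half.
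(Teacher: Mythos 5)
The paper itself gives no proof of this lemma --- it is imported from \cite{BD2} --- so there is nothing internal to compare against; I can only assess your argument on its own terms. Your two-inequality plan is the right one, and the half $C_E\le M(\tilde L)$ is essentially complete: the bracketing $l_{k(n)+1}<\tau_n\le l_{k(n)}$, the deduction $\tau_n\ge m_{k(n)+1}$, and the verification that $\{(l_{k(n)},m_{k(n)})\}_{n}\in\tilde I_{E}^{d}(\tilde\tau)$ all go through (note that this half never uses universality). But there are two concrete defects in what you present as the technical heart.

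First, the ``structural observation'' is false as stated: universality does \emph{not} force $\tilde L$ to enumerate all sufficiently small components of $Ext\,E$. A component $(a,b)$ with $a/b$ bounded away from $0$ can occur only finitely often in any member of $\tilde I_{E}^{d}$ (because of the requirement $(b_n-a_n)/b_n\to1$), so infinitely many such components may be omitted from every universal $\tilde L$ without producing a $\tilde B\not\preceq\tilde L$. For instance, $E=\{4^{-n^2}\}\cup\{2\cdot4^{-n^2}\}$ lies in $\textbf{\emph{CSP}}$, $\tilde L=\{(2\cdot4^{-(n+1)^2},4^{-n^2})\}_{n}$ is universal, and the arbitrarily small components $(4^{-n^2},2\cdot4^{-n^2})$ are all missed. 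Fortunately the only consequence you actually invoke --- that no $l_k$ lies in $(m_{n+1},l_n)$ for large $n$ --- is true for a far more elementary reason: $(l_k)$ is almost decreasing with $l_k\to0$, so $l_k\le l_{n+1}<m_{n+1}$ for $k\ge n+1$ and $l_k\ge l_n$ for $k\le n$. Replace the false claim by this observation.

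Second, the reverse inequality has an unhandled boundary case. You choose $\tau_j\in E$ with $m_{n_j+1}\le\tau_j$ but then argue with the strict chain $a_j\ge\tau_j>m_{n_j+1}$. Maximality of $(l_{n_j+1},m_{n_j+1})$ only guarantees $E\cap[m_{n_j+1},m_{n_j+1}+\varepsilon)\ne\varnothing$; it can happen (as in the example above) that $m_{n_j+1}\in E$ is isolated from the right, forcing $\tau_j=m_{n_j+1}$. Then $a_j=l_{g(j)}=m_{n_j+1}$ is not excluded by your interval argument, and if this occurred cofinally the bound $a_j/\tau_j\ge l_{n_j}/\tau_j$ would collapse. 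The gap is fillable: a component with left endpoint $m_{n_j+1}$ must end at or before $l_{n_j}$ (since $E$ meets $(m_{n_j+1},l_{n_j}]$), so its endpoint ratio is at least $m_{n_j+1}/l_{n_j}$, which tends to $1/M(\tilde L)>0$ and is therefore incompatible with $a_j/b_j\to0$; hence $a_j=m_{n_j+1}$ happens only finitely often and your estimate survives. You should also dispose of $M(\tilde L)=1$ separately (there $C_E\ge1=M(\tilde L)$ is automatic from $\tau_n\le a_n$), since your requirement $\tau_j<l_{n_j}$ presupposes genuine room between $m_{n_j+1}$ and $l_{n_j}$.
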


\begin{proposition}\label{Pr4.6}
Let $E\subseteq\mathbb R^{+}$ and let $0\in ac E.$ Then the
equality
\begin{equation}\label{L4.15}
C_E =R^{*}(\mathbf{\Omega_{0}^{E}(n)})
\end{equation} holds.
\end{proposition}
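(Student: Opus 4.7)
The plan is to prove \eqref{L4.15} by establishing the two inequalities $C_E \le R^*(\mathbf{\Omega_0^E(n)})$ and $R^*(\mathbf{\Omega_0^E(n)}) \le C_E$. If $C_E=\infty$ we are done: by Proposition~\ref{P2} this forces $E\notin\textbf{CSP}$, and then Lemma~\ref{lem4.5} in contrapositive form gives $R^*(\mathbf{\Omega_0^E(n)})=\infty$. Hence we may assume $C_E<\infty$, which by Proposition~\ref{P2} means $E\in\textbf{CSP}$, so every $\tilde\tau\in\tilde E_0^d$ makes $E$ be $\tilde\tau$-strongly porous at $0$.

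For $C_E\le R^*(\mathbf{\Omega_0^E(n)})$, fix $\tilde\tau\in\tilde E_0^d$; it suffices to construct a pretangent space in $\mathbf{\Omega_0^E(n)}$ of radius at least $C(\tilde\tau)$. By Remark~\ref{R1} pick $\{(a_n,b_n)\}_{n\in\mathbb N}\in\tilde I_E^d(\tilde\tau)$ with $\limsup_{n\to\infty}a_n/\tau_n=C(\tilde\tau)$ and pass to an index subsequence along which $\lim a_n/\tau_n=C(\tilde\tau)$ actually exists. Since $a_n$ is a left endpoint of a maximal component of $\mathrm{Ext}\,E$, we have $a_n\in\overline E$, so we can choose $y_n\in E$ with $|y_n-a_n|\le\tau_n/n$, giving $y_n/\tau_n\to C(\tilde\tau)$. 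Take $\tilde r:=\tilde\tau$: this scaling is normal, since with $x_n=\tau_n$ we have $d(x_n,0)=\tau_n$ almost decreasing and $d(x_n,0)/r_n\equiv 1$. A routine computation of limits shows that $\tilde 0,\tilde\tau,\tilde y$ are pairwise mutually stable w.r.t.\ $\tilde r$ with $\tilde d(\tilde 0,\tilde y)=C(\tilde\tau)$. Extending by Zorn's Lemma to a maximal self-stable family and taking metric identification produces $\Omega_{0,\tilde r}^E\in\mathbf{\Omega_0^E(n)}$ with $\rho(\alpha,\pi(\tilde y))=C(\tilde\tau)$. Supremizing over $\tilde\tau\in\tilde E_0^d$ gives $C_E\le R^*(\mathbf{\Omega_0^E(n)})$.

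For the reverse inequality, take any $\Omega_{0,\tilde r}^E\in\mathbf{\Omega_0^E(n)}$ and any $\beta=\pi(\tilde x)$ in it. By Proposition~\ref{prop}~$(i_3)$ pick a witness for normality: $(\tau_n)\in\tilde E$ with $\tau_n\in E\setminus\{0\}$ almost decreasing and $\tau_n/r_n\to 1$. Then $\tilde\tau\in\tilde E_0^d$ and $L:=\rho(\alpha,\beta)=\lim_{n\to\infty}x_n/\tau_n$. Assume for contradiction $L>C_E\ge C(\tilde\tau)$, and choose $\{(a_n,b_n)\}\in\tilde I_E^d(\tilde\tau)$ with $\limsup a_n/\tau_n<L$. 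For each $n$, since $x_n\in E$ and $(a_n,b_n)\cap E=\varnothing$, either $x_n\le a_n$ or $x_n\ge b_n$. The second alternative cannot occur infinitely often: $a_n/b_n\to 0$ together with $\tau_n\le a_n$ forces $b_n/\tau_n\ge b_n/a_n\to\infty$, contradicting finiteness of $L$. Hence $x_n\le a_n$ eventually, so $L=\lim x_n/\tau_n\le\limsup a_n/\tau_n<L$, a contradiction. The most delicate step is precisely this case analysis, which crucially uses both defining features $a_n/b_n\to 0$ and $\tau_n\le a_n$ of sequences in $\tilde I_E^d(\tilde\tau)$.
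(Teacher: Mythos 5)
Your proof is correct and follows the same skeleton as the paper's (two inequalities, with the infinite cases dispatched via Proposition~\ref{P2} and Lemma~\ref{lem4.5}), but both halves are executed by different means, and in each case your route is the more elementary one. For $C_E\le R^{*}(\mathbf{\Omega_{0}^{E}(n)})$ the paper faces the same obstacle you do --- the left endpoints $a_n$ realizing $C(\tilde\tau)$ need not lie in $E$ --- and resolves it by passing to the closure $E^{1}$ and invoking Lemma~\ref{Lem1.7} to identify $R^{*}(\mathbf{\Omega_{0}^{E}(n)})$ with $R^{*}(\mathbf{\Omega_{0}^{E^{1}}(n)})$; you instead observe that $a_n\in\overline{E}$ (by maximality of the complementary intervals) and approximate $a_n$ by $y_n\in E$ with error $o(\tau_n)$, which keeps the whole construction inside $E$ and makes Lemma~\ref{Lem1.7} unnecessary. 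For the reverse inequality the paper works with an almost decreasing $\tilde y$, splits off the case $\lim y_n/x_n=0$, derives $\tilde y\asymp\tilde a$ from positivity of the limit, and extracts $y_n\le a_n$ from Lemma~\ref{Lem2.3}; your dichotomy $x_n\le a_n$ or $x_n\ge b_n$, with the second branch excluded because $b_n/\tau_n\ge b_n/a_n\to\infty$ would contradict finiteness of $L$, reaches the same conclusion directly for an arbitrary element $\beta=\pi(\tilde x)$ of the pretangent space, with no case split and no appeal to the $\asymp$ machinery. Two cosmetic points: after passing to the subsequence along which $a_n/\tau_n$ converges, the scaling sequence you use is the corresponding subsequence of $\tilde\tau$, not $\tilde\tau$ itself (it is still normal, with itself as witness); and the witness $(\tau_n)$ supplied by Proposition~\ref{prop}~$(i_3)$ may have finitely many zero terms, which should be replaced by nonzero elements of $E$ before asserting $\tilde\tau\in\tilde E_{0}^{d}$. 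Neither affects the argument.
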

\begin{proof}
Let us prove the inequality
\begin{equation}\label{L4.16}
C_E \ge R^{*}(\mathbf{\Omega_{0}^{E}(n)}).
\end{equation}This is trivial if $C_E=\infty.$  Suppose
that
$
C_E <\infty.
$
Inequality \eqref{L4.16} holds if, for every normal scaling sequence $\tilde
r=(r_n)_{n\in\mathbb N}$ and each $\tilde y=(y_n)_{n\in\mathbb N}\in\tilde E_{0}^{d},$
the existence of the finite limit
$\mathop{\lim}\limits_{n\to\infty}\frac{y_n}{r_n}$ implies the
inequality
\begin{equation}\label{L4.18}
\lim_{n\to\infty}\frac{y_n}{r_n}\le C_E.
\end{equation}
Since $\tilde r$ is normal, Proposition~\ref{prop} implies that there is $\tilde x=(x_n)_{n\in\mathbb N}\in\tilde E_{0}^{d}$
with $\mathop{\lim}\limits_{n\to\infty}\frac{r_n}{x_n}=1.$ Consequently \eqref{L4.18} holds if
and only if
\begin{equation}\label{L4.19}
\lim_{n\to\infty}\frac{y_n}{x_n}\le C_E.
\end{equation}
If $\mathop{\lim}\limits_{n\to\infty}\frac{y_n}{x_n}=0,$ then \eqref{L4.19} is trivial.
Suppose that $0<\mathop{\lim}\limits_{n\to\infty}\frac{y_n}{x_n}<\infty.$ The last double
inequality implies the equivalence $\tilde x\asymp\tilde y.$ In accordance with
Proposition~\ref{P2}, $E\in \textbf{\emph{CSP}}$ if and only if $C_{E}<\infty$
holds. Hence $E$ is $\tilde x$-strongly porous at 0. Consequently there is $\{(a_n,
b_n)\}_{n\in\mathbb N}\in\tilde I_{E}^{d}$ such that $\tilde x\asymp\tilde a.$ The relations
$\tilde x\asymp\tilde y$ and $\tilde x\asymp\tilde a$ imply $\tilde y\asymp\tilde a.$
Using Lemma~\ref{Lem2.3} we can find $N_0 \in \mathbb N$ such that $y_n \le a_n$ for
$n\ge N_0.$ Consequently we have $\frac{y_n}{x_n}\le\frac{a_n}{x_n}$ for $n\ge N_0,$
 which implies
$$\lim_{n\to\infty}\frac{y_n}{x_n}\le\limsup_{n\to\infty}\frac{a_n}{x_n}\le C(\tilde\tau)\le
C_E$$ (see \eqref{L9}). Inequality \eqref{L4.16} follows.

To prove \eqref{L4.15}, it
still remains to verify the inequality
\begin{equation}\label{L4.20}
C_E\le R^{*}(\mathbf{\Omega_{0}^{E}(n)}).
\end{equation}
It is trivial if $R^{*}(\mathbf{\Omega_{0}^{E}(n)})=\infty.$ Suppose that
\begin{equation}\label{L4.21}
 R^{*}(\mathbf{\Omega_{0}^{E}(n)})<\infty.
\end{equation}
Inequality \eqref{L4.20}
holds if
\begin{equation}\label{L4.22}
 C(\tilde x)\le R^{*}(\mathbf{\Omega_{0}^{E}(n)}).
\end{equation} for every $\tilde x\in\tilde E_{0}^{d}.$ Let $E^{1}$ denote the closure of the set $E$ in $\mathbb R^{+}$ and let $\tilde x\in\tilde E_{0}^{d}.$ It follows at once from Lemma~\ref{Lem1.7}
that
$R^{*}(\mathbf{\Omega_{0}^{E}(n)})=R^{*}(\mathbf{\Omega_{0}^{E^{1}}(n)}).$ Consequently
\eqref{L4.22} holds if $C(\tilde x)\le R^{*}(\mathbf{\Omega_{0}^{E^{1}}(n)}).$ By
Lemma~\ref{lem4.5}, inequality \eqref{L4.21} implies that $E\in \textbf{\emph{CSP}}$. Hence, by Lemma~\ref{Lem2.3}, there is $\{(a_n, b_n)\}_{n\in\mathbb
N}\in\tilde I_{E}^{d}$ such that \begin{equation}\label{L4.23}
 \limsup_{n\to\infty}\frac{a_n}{x_n}<\infty
\end{equation} and $a_n \ge x_n$ for sufficiently large $n.$ Inequality \eqref{L4.23}
implies the equality
\begin{equation}\label{L4.24}
 C(\tilde x)=\limsup_{n\to\infty}\frac{a_n}{x_n},
\end{equation} (see Remark~\ref{R1}). Let $(n_j)_{j\in\mathbb N}$ be an infinite increasing sequence for which
\begin{equation}\label{L4.25}
 \lim_{j\to\infty}\frac{a_{n_j}}{x_{n_j}}=\limsup_{n\to\infty}\frac{a_n}{x_n}.
\end{equation} Define $r_j :=x_{n_j},$ $\tilde r:=(r_j)_{j\in\mathbb N}$ and
$t_j:=a_{n_j},$ $\tilde t:=(t_j)_{j\in\mathbb N}.$ It is clear that $\tilde r$ is a
normal scaling sequence. Relation \eqref{L4.23} and \eqref{L4.25} imply that $\tilde
t$ and $\tilde 0=(0,0,...,0,...)$ are mutually stable w.r.t. $\tilde r.$ Let $\tilde
E_{0, \tilde r}^{1}$ be a maximal (in $\tilde E^{1}$) self-stable family containing
$\tilde t$ and $\tilde 0.$ Using \eqref{L4.1}, \eqref{L4.24} and \eqref{L4.25}, we
obtain $$R^{*}(\mathbf{\Omega_{0}^{E^{1}}(n)})\ge\sup_{\tilde y\in\tilde E_{0,\tilde
r}^{1}}\tilde d_{\tilde r}(\tilde y, \tilde 0)\ge \tilde d_{\tilde r}(\tilde t, \tilde
0)=C(\tilde x).
$$ Hence \eqref{L4.20} holds that completes the proof of \eqref{L4.15}.
\end{proof}
The following theorem gives the necessary and sufficient conditions under which $\mathbf{\Omega_{p}^{X}(n)}$ is uniformly bounded.
\begin{theorem}\label{th3.10}
Let $(X,d,p)$ be a pointed metric space and let $E=S_{p}(X).$ The
family $\mathbf{\Omega_{p}^{X}(n)}$ is uniformly bounded if and only if
$E\in \textbf{CSP}.$ If $\mathbf{\Omega_{p}^{X}(n)}$ is
uniformly bounded and $p\in ac X$, then
\begin{equation}\label{L4.27}
R^{*}(\mathbf{\Omega_{p}^{X}(n)})=M(\tilde L)
\end{equation}
where $\tilde L$ is an universal element of $(\tilde
I_{E}^{d}, \preceq)$ and $M(\tilde L)$ is defined by \eqref{L13}.
\end{theorem}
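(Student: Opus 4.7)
The plan is to bootstrap the theorem from the preparatory results of Section~3, so that essentially no new construction is needed beyond a careful case split.

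First I would dispose of the degenerate case $p\notin ac X$. In this situation Definition~\ref{D4.2} cannot be satisfied by any scaling sequence, so $\mathbf{\Omega_{p}^{X}(n)}=\varnothing$ is vacuously uniformly bounded; at the same time there is some $\varepsilon>0$ with $B(p,\varepsilon)\cap X=\{p\}$, which forces $E\cap(0,\varepsilon)=\varnothing$ so that $0$ is an isolated point of $E=S_{p}(X)$. By the remark following Definition~\ref{D2*} this gives $E\in\textbf{CSP}$. Thus both sides of the first equivalence hold, and \eqref{L4.27} need not be addressed because its second hypothesis $p\in ac X$ fails.

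For the main case $p\in ac X$ (which immediately yields $0\in ac E$) I would assemble the chain
\begin{equation*}
R^{*}(\mathbf{\Omega_{p}^{X}(n)})
\;\stackrel{\text{Prop.\,\ref{Pr4.4}}}{=}\;
R^{*}(\mathbf{\Omega_{0}^{E}(n)})
\;\stackrel{\text{Prop.\,\ref{Pr4.6}}}{=}\;
C_{E},
\end{equation*}
and combine it with Proposition~\ref{P2}, which states that $E\in\textbf{CSP}$ if and only if $C_{E}<\infty$. Concatenating these yields
\begin{equation*}
\mathbf{\Omega_{p}^{X}(n)}\ \text{uniformly bounded}\iff R^{*}(\mathbf{\Omega_{p}^{X}(n)})<\infty\iff C_{E}<\infty\iff E\in\textbf{CSP},
\end{equation*}
giving the first assertion.

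For the equality \eqref{L4.27}, the hypothesis combined with the first part provides $E\in\textbf{CSP}$ (so in particular $E$ is strongly porous at $0$) and $0\in ac E$. Theorem~\ref{ImpTh} then furnishes a universal $\tilde L\in\tilde I_{E}^{d}$ with $M(\tilde L)<\infty$, and Lemma~\ref{Lem2.15} identifies $M(\tilde L)=C_{E}$. Plugging this into the chain above yields $R^{*}(\mathbf{\Omega_{p}^{X}(n)})=M(\tilde L)$. I do not expect any genuine obstacle: the theorem is a bookkeeping consequence of the heavier Propositions~\ref{Pr4.4} and \ref{Pr4.6} together with the \textbf{CSP} machinery of Section~2. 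The only point requiring vigilance is the isolated-point case, since the reductions of Section~3 are stated under $p\in ac X$ (respectively $0\in ac E$), but that case is trivially handled as above.
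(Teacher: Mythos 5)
Your proposal is correct and follows essentially the same route as the paper: the same chain $R^{*}(\mathbf{\Omega_{p}^{X}(n)})=R^{*}(\mathbf{\Omega_{0}^{E}(n)})=C_{E}$ via Propositions~\ref{Pr4.4} and \ref{Pr4.6}, combined with Proposition~\ref{P2} for the equivalence and with Theorem~\ref{ImpTh} plus Lemma~\ref{Lem2.15} for \eqref{L4.27}. Your treatment of the isolated-point case is merely a more explicit version of the paper's remark that the theorem is trivial when $p\notin ac X$.
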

\begin{proof} The theorem is trivial if $p$ is an isolated point of $X,$ so that we assume $p\in ac X.$ By Proposition~\ref{Pr4.4}, $\mathbf{\Omega_{p}^{X}(n)}$ is uniformly bounded
if and only if $\mathbf{\Omega_{0}^{E}(n)}$ is uniformly bounded. Since $C_E
=R^{*}(\mathbf{\Omega_{0}^{E}(n)})$ (see \eqref{L4.15}), $\mathbf{\Omega_{0}^{E}(n)}$ is
uniformly bounded if and only if $C_{E}<\infty.$ Using Proposition~\ref{P2} we obtain
that $\mathbf{\Omega_{0}^{E}(n)}$ is uniformly bounded if and only if $E\in \textbf{\emph{CSP}}.$

Let us prove that \eqref{L4.27} holds if $\mathbf{\Omega_{p}^{X}(n)}$ is uniformly
bounded and $p\in ac X$. In this case, as was proved above, $E\in \textbf{\emph{CSP}}.$
Consequently, by Theorem~\ref{ImpTh}, there is an universal element $\tilde L
\in\tilde I_{E}^{d}$ such that $M(\tilde L)<\infty.$
Lemma~\ref{Lem2.15} implies that
\begin{equation}\label{L4.29}
M(\tilde L)=C_E.
\end{equation}
By Proposition~\ref{Pr4.6}, we have also the equality
\begin{equation}\label{L4.30}
C_E=R^{*}(\mathbf{\Omega_{0}^{E}(n)}).
\end{equation}
Since $R^{*}(\mathbf{\Omega_{0}^{E}(n)})=R^{*}(\mathbf{\Omega_{p}^{X}(n)}),$ equalities \eqref{L4.29} and
\eqref{L4.30} imply \eqref{L4.27}.
\end{proof}

Theorem~\ref{th3.10} is an example of translation of some results related to completely strongly porous at 0 sets on the language of pretangent spaces. In the rest of the present section of the paper we shall obtain one more example.

Let $(X_i, d_i, p_i), i=1,2,$ be pointed metric spaces. We write $$(Y, d_y, p_y)=X_{1}\uplus X_{2}$$ if $(Y, d_y, p_y)$ is a pointed metric space for which there are $Y_{i}\subseteq Y$ and isometries $f_{i}: Y_{i}\rightarrow X_i$ such that $Y_1 \cup Y=Y, \, p_y\in Y_1 \cap Y_2$ and $f_{i}(p_y)=p_i$ with $i=1,2.$ For example, to construct $X_{1}\uplus X_{2}$ we can use some isometric embeddings $F_i$ of $X_i, i=1,2,$ into a linear normed space $X$ equipped with $l_{\infty}$ norm \cite[p. 13]{Se}. The translation $\Phi(x)=x+F_{1}(p_1)-F_{2}(p_2), x\in X,$ is a self-isometry of $X$ transforming $F_{2}(p_2)$ at $F_{1}(p_1).$ Consequently the set $F_{1}(X_1)\cup \Phi(F_2 (X_2))$ with the metric from $X$ and the marked point $F_{1}(p_1)=\Phi(F_{2}(p_2))$ meets all desired properties of $X_{1}\uplus X_{2}.$

Proposition~\ref{pr4.4.8} and Theorem~\ref{th3.10} give us the following
\begin{corollary}\label{col4.4.10}
Let $(X, d_x, p_x)$ be a pointed metric space. The following statements are equivalent:
\item[\rm(i)]\textit{$\mathbf{\Omega_{p_{z}}^{Z}(n)}$ is uniformly bounded for every $(Z,d_z,p_z)=Y\uplus X$ having the uniformly bounded $\mathbf{\Omega_{p_{y}}^{Y}(n)}$;}
\item[\rm(ii)]\textit{$p_x\notin ac X.$ }
\end{corollary}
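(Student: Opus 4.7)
The plan is to translate the assertion into one about the distance sets $S_p(\cdot)$ and then apply Proposition~\ref{pr4.4.8} with $E=S_{p_x}(X)$. Two observations reduce the work. First, by Theorem~\ref{th3.10}, for any pointed metric space $(W,d,q)$ with $q\in ac W$, $\mathbf{\Omega_{q}^{W}(n)}$ is uniformly bounded if and only if $S_{q}(W)\in\textbf{CSP}$ (and the case when $q$ is isolated is trivial). Second, from the definition of $\uplus$ every point of $Z=Y\uplus X$ lies in one of the isometric copies $Y_1$, $Y_2$ of $Y$, $X$, and under those isometries $p_z$ corresponds to $p_y$ and $p_x$; consequently
$$
S_{p_z}(Z)=S_{p_y}(Y)\cup S_{p_x}(X).
$$
Thus condition (i) is equivalent to the assertion that $S_{p_y}(Y)\cup S_{p_x}(X)\in\textbf{CSP}$ whenever $S_{p_y}(Y)\in\textbf{CSP}$, i.e.\ precisely to inclusion~\eqref{4.4.22} with $E=S_{p_x}(X)$.

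The implication (ii)$\Rightarrow$(i) is then immediate from Proposition~\ref{pr4.4.8}: if $p_x\notin ac X$ then $0$ is an isolated point of $E=S_{p_x}(X)$, so \eqref{4.4.22} holds, which is precisely the reformulation of (i) just obtained.

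For the converse I will argue by contraposition. Assuming $p_x\in ac X$, we have $0\in ac E$, and Proposition~\ref{pr4.4.8} (concretely, the set $A:=E_1^*$ constructed in its proof, consisting of strictly positive reals with $\tau_{n+1}^*/\tau_n^*\to 0$) supplies some $A\in\textbf{CSP}$ such that $A\cup E\notin\textbf{CSP}$. To produce a counter-example to (i) I realise $A$ as a distance set: put $Y:=A\cup\{0\}\subseteq\mathbb R^+$ with the Euclidean metric and marked point $p_y:=0$, so that $S_{p_y}(Y)=A\cup\{0\}$. The sequence $\tilde L=\{(\tau_{n+1}^*,\tau_n^*)\}_{n\in\mathbb N}$ from Example~\ref{ex4.4.7} remains universal in $\tilde I_{A\cup\{0\}}^{d}$ with $M(\tilde L)=1$, so $A\cup\{0\}\in\textbf{CSP}$, and Theorem~\ref{th3.10} gives that $\mathbf{\Omega_{p_y}^{Y}(n)}$ is uniformly bounded. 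Taking any $Z=Y\uplus X$ we then have
$$
S_{p_z}(Z)=(A\cup\{0\})\cup S_{p_x}(X)=A\cup E\notin\textbf{CSP}
$$
(since $0\in E$ already), so by Theorem~\ref{th3.10} $\mathbf{\Omega_{p_z}^{Z}(n)}$ is not uniformly bounded, contradicting (i).

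The essentially only technical point is the realisation step in the contrapositive, namely checking that the specific witness $A$ coming out of Proposition~\ref{pr4.4.8}, after the necessary adjunction of $0$, is still in \textbf{CSP}; this is handled by the explicit form $A=E_1^*$, whose complementary intervals near $0$ are exactly those examined in Example~\ref{ex4.4.7}.
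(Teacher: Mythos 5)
Your proposal is correct and follows essentially the same route as the paper, which proves the corollary in one line by combining Proposition~\ref{pr4.4.8} and Theorem~\ref{th3.10} with the observation that $S_{p_z}(Z)=S_{p_x}(X)\cup S_{p_y}(Y)$. The only thing you add beyond the paper's sketch is the (worthwhile) explicit realisation of the witness set $A=E_1^*$ as the distance set of a concrete space $Y=A\cup\{0\}\subseteq\mathbb R^{+}$, which the paper leaves implicit.
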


For the proof it suffices to note that $S_{p_{z}}(Z)=S_{p_{x}}(X)\cup S_{p_{y}}(Y).$


\section {Uniform boundedness and uniform discreteness}
\hspace*{\parindent} Let $\mathfrak F=\{(X_{i}, d_{i}, p_i): i \in I\}$ be a nonempty family
of pointed metric spaces.  We set
\begin{equation}\label{4.4.1}
\rho_{*}(X_i):=\begin{cases}
         \inf\{d_i(x,p_i): x\in X_{i}\setminus\{p_i\}\} & \mbox{if} $ $ X_i \ne \{p_i\}\\
         +\infty & \mbox{if}$ $ X_i = \{p_i\}\\
         \end{cases}
\end{equation}
for $i\in I$ and write
$
R_{*}(\mathfrak F):=\mathop{\inf}\limits_{i\in I}\rho_{*}(X_i).
$
We shall say that $\mathfrak F$ is \emph{uniformly discrete} (w.r.t. the marked points $p_i$) if $R_{*}(\mathfrak F)>0.$

As in Proposition~\ref{Pr4.1} it is easy to show that the family $\mathbf {\Omega_{p}^{X}}$ of all pretangent spaces is uniformly discrete if and only if $p$ is an isolated point of the metric space $X.$ Thus, it make sense to consider $\mathbf {\Omega_{p}^{X}(n)}.$

\begin{theorem}\label{th4.4.1}
Let $(X,d,p)$ be a pointed metric space such that $\mathbf {\Omega_{p}^{X}(n)}\ne\varnothing.$ Then $\mathbf {\Omega_{p}^{X}(n)}$ is uniformly discrete if and only if it is uniformly bounded. The equality
\begin{equation}\label{4.4.3}
R_{*}(\mathbf {\Omega_{p}^{X}(n)})=\frac{1}{R^{*}(\mathbf {\Omega_{p}^{X}(n)})}
\end{equation} holds if $\mathbf {\Omega_{p}^{X}(n)}$ is uniformly bounded.
\end{theorem}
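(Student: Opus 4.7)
The plan is to reduce to the distance set $E = S_p(X)$ and mimic the strategy of Proposition~\ref{Pr4.6}. The hypothesis $\mathbf{\Omega_p^X(n)} \ne \varnothing$ forces $p \in acX$ (otherwise no normal scaling sequence exists, cf.\ the proof of Proposition~\ref{Pr4.4}). My first step is to prove the analog $R_*(\mathbf{\Omega_p^X(n)}) = R_*(\mathbf{\Omega_0^E(n)})$ of Proposition~\ref{Pr4.4}, using the substitution $s_n = d(x_n, p)$ to swap between $\tilde X$-sequences and $\tilde E$-sequences while preserving both the value and the positivity of $\lim d(x_n, p)/r_n$. Combined with Proposition~\ref{Pr4.4}, Theorem~\ref{th3.10} and \eqref{L4.15}, the problem reduces to showing, for $E \subseteq \mathbb R^+$ with $0 \in acE$, that $R_*(\mathbf{\Omega_0^E(n)}) = 1/C_E$ when $E \in \textbf{CSP}$ and $R_*(\mathbf{\Omega_0^E(n)}) = 0$ otherwise (so that $R_* R^* = 1$ whenever the spaces are uniformly bounded, and $R_* = 0$ whenever they are not).

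For $E \in \textbf{CSP}$ I would argue $R_* = 1/C_E$ in two steps. For $R_* \le 1/C_E$, take a universal $\tilde L = \{(l_n, m_n)\} \in \tilde I_E^d$ (Theorem~\ref{ImpTh}) with $M(\tilde L) = C_E$ (Lemma~\ref{Lem2.15}); passing to the closure via Lemma~\ref{Lem1.7} so that $l_n, m_n \in E$, choose a subsequence $(n_k)$ with $l_{n_k}/m_{n_k + 1} \to C_E$, put $r_k := l_{n_k}$, $y_k := m_{n_k + 1}$, and observe that $\tilde r$ is almost decreasing in $E$ (hence a normal scaling) and $\lim y_k/r_k = 1/C_E$; placing $\tilde r, \tilde y, \tilde 0$ in a maximal self-stable family yields a pretangent space with $\rho(\alpha, \pi(\tilde y)) = 1/C_E$. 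For $R_* \ge 1/C_E$, fix any $\Omega_{0, \tilde r}^E \in \mathbf{\Omega_0^E(n)}$ and any $\tilde y$ with $c := \lim y_n/r_n > 0$; the case $c \ge 1$ is immediate since $C_E = R^* \ge 1$ (as in the proof of Lemma~\ref{lem4.5}), so assume $0 < c < 1$. After a subsequence refinement rendering $\tilde y \in \tilde E_0^d$, $\tilde y$-strong porosity of $E$ (Proposition~\ref{P2}) furnishes $\{(a_n, b_n)\} \in \tilde I_E^d(\tilde y)$ with $\limsup a_n/y_n = C(\tilde y) \le C_E$. Since $r_n \in E$ and $(a_n, b_n) \cap E = \varnothing$, the alternative $r_n \ge b_n$ would force $b_n/y_n \to \infty$ (because $y_n \le a_n$ and $a_n/b_n \to 0$), contradicting $c > 0$; hence $r_n \le a_n$ eventually and $1/c = \lim r_n/y_n \le \limsup a_n/y_n \le C_E$.

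For $E \notin \textbf{CSP}$ I would pick $\tilde \tau \in \tilde E_0^d$ for which $E$ fails to be $\tilde \tau$-strongly porous, and for each $k > 1$ use the negation of Lemma~\ref{Pr5} to extract $x_n \in E \cap (k\tau_n, K\tau_n)$ for infinitely many $n$. A sparse subsequence makes $x_{n_j}$ strictly decreasing with $x_{n_j}/\tau_{n_j} \to c \in [k, K]$; then $\tilde r := (x_{n_j})$ is a normal scaling sequence and $\tilde y := (\tau_{n_j})$ produces a pretangent space containing a nonzero point at distance $1/c \le 1/k$ from $\alpha$, so $R_*(\mathbf{\Omega_0^E(n)}) \le 1/k$ for every $k > 1$, whence $R_* = 0$. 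The main obstacle I anticipate is the dichotomy step in the lower bound --- rigorously ruling out $r_n \ge b_n$ along every subsequence while simultaneously arranging that $\tilde y$ (a priori only mutually stable with $\tilde r$) lies in $\tilde E_0^d$ so that $C(\tilde y)$ is available; the rest is a direct adaptation of arguments already present in Proposition~\ref{Pr4.4}, Lemma~\ref{lem4.5} and Proposition~\ref{Pr4.6}.
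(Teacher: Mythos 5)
Your proposal is essentially correct, but it takes a genuinely different route from the paper. The paper never computes $R_{*}(\mathbf{\Omega_{p}^{X}(n)})$ directly: it first proves an abstract statement (Theorem~\ref{th4.4.3}) that for any \emph{weakly self-similar} family of pointed spaces with nonempty unit spheres, uniform boundedness and uniform discreteness are equivalent and $R_{*}=1/R^{*}$; it then introduces the subfamily $^{\textbf{1}}\mathbf{\Omega_{p}^{X}}$ of pretangent spaces containing a point at distance exactly $1$ from $\alpha$, and uses Lemma~\ref{Lem1.6} to pack sequences realizing both $\rho_{*}$ and $\rho^{*}$ together with a unit-distance sequence into a single pretangent space (Lemma~\ref{l4.4.4}), so that $R_{*}$ and $R^{*}$ are unchanged on passing to this subfamily. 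Your argument instead reduces to $E=S_{p}(X)$ and proves the identity $R_{*}(\mathbf{\Omega_{0}^{E}(n)})=1/C_{E}$ by porosity estimates dual to those of Proposition~\ref{Pr4.6}, plus a separate degeneration argument showing $R_{*}=0$ when $E\notin\textbf{\emph{CSP}}$. What your route buys is an explicit formula ($R_{*}=1/C_{E}=1/M(\tilde L)$, which the paper only obtains a posteriori in Theorem~\ref{th4.4.5}); what the paper's route buys is brevity given Proposition~\ref{Pr4.6} and a reusable general principle about self-similar families. Two points in your sketch need the repair you yourself anticipated: the phrase ``since $r_n\in E$'' is not justified --- a normal scaling sequence need not take values in $E$ --- so the dichotomy $x_n\le a_n$ versus $x_n\ge b_n$ must be run on the witness $\tilde x\in\tilde E_{0}^{d}$ of normality with $\lim_{n\to\infty}x_n/r_n=1$ (exactly as in the paper's proof of Proposition~\ref{Pr4.6}); and the passage to a subsequence making $\tilde y$ decreasing must be checked to preserve normality of $\tilde r$ and the value of $c$, which it does since subsequences of almost decreasing sequences are almost decreasing. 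With those routine fixes the argument goes through.
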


\begin{remark}\label{r4.4.2}
The condition $\mathbf {\Omega_{p}^{X}(n)}\ne\varnothing$ implies that $R^{*}(\mathbf {\Omega_{p}^{X}(n)})>0.$ Putting $\frac{1}{\infty}=0$ we may also expend \eqref{4.4.3} on arbitrary nonvoid families $\mathbf {\Omega_{p}^{X}(n)}.$
\end{remark}

We will prove Theorem~\ref{th4.4.1} in some more general setting.

Let $(X,d,p)$ be a pointed metric space and let $t>0.$ Write $t X$ for the pointed metric space $(X, t d, p)$, i.e. $t X$ is the pointed metric space with the same underluing set $X$ and the marked point $p,$ but equipped with the new metric $t d$ instead of $d$.

\begin{definition}\label{D4.4.2}
Let $\mathfrak F$ be a nonempty family of pointed metric spaces. $\mathfrak F$ is weakly self-similar if for every $(Y,d,p)\in\mathfrak F$ and every nonzero $t \in S_{p}(Y)$ the space $\frac{1}{t}Y$ belongs to $\mathfrak F.$
\end{definition}

\begin{theorem}\label{th4.4.3}
Let $\mathfrak F=\{(X_i, d_i, p_i): i\in I\}$ be a weakly self-similar family of pointed metric spaces. Suppose that the sphere
\begin{equation*}
S_{i}=\{x\in X_i: d_{i}(x, p_i)=1\}
\end{equation*}
is nonvoid for every $i\in I.$ Then $\mathfrak F$ is uniformly bounded if and only if $\mathfrak F$ is uniformly discrete w.r.t. the marked points $p_i, i\in I.$ If $\mathfrak F$ is uniformly bounded, then the equality
\begin{equation}\label{4.4.4}
R_{*}(\mathfrak F)=\frac{1}{R^{*}(\mathfrak F)}
\end{equation}
holds.
\end{theorem}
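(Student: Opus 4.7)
The plan is to use weak self-similarity to tightly couple $\rho^{*}$ and $\rho_{*}$ on each individual member of $\mathfrak F$, and then exploit the unit-sphere hypothesis as a universal anchor that converts these relative bounds into absolute ones. The core observation is that for any $(Y,d,p)\in\mathfrak F$ and any nonzero $t\in S_{p}(Y)$, the rescaled space $\frac{1}{t}Y=(Y,t^{-1}d,p)$ belongs to $\mathfrak F$ by Definition~\ref{D4.4.2}, and trivially $\rho^{*}(\frac{1}{t}Y)=t^{-1}\rho^{*}(Y)$ and $\rho_{*}(\frac{1}{t}Y)=t^{-1}\rho_{*}(Y)$.

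First I would apply the definitions of $R^{*}(\mathfrak F)$ and $R_{*}(\mathfrak F)$ to the rescaled space $\frac{1}{t}Y$, obtaining $\rho^{*}(Y)\le R^{*}(\mathfrak F)\cdot t$ and $\rho_{*}(Y)\ge R_{*}(\mathfrak F)\cdot t$ for every $t\in S_{p}(Y)\setminus\{0\}$. Taking the infimum over $t$ in the first inequality and the supremum in the second, which by the very definition of $S_{p}(Y)$ equal $\rho_{*}(Y)$ and $\rho^{*}(Y)$ respectively, yields the key pair
$$
\rho^{*}(Y)\le R^{*}(\mathfrak F)\cdot\rho_{*}(Y),\qquad \rho_{*}(Y)\ge R_{*}(\mathfrak F)\cdot\rho^{*}(Y),
$$
valid for every $Y\in\mathfrak F$ whenever the corresponding right-hand constant is finite. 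The sphere hypothesis then provides the universal anchor: each $X_{i}$ contains a point at distance exactly $1$ from $p_{i}$, whence $\rho_{*}(X_{i})\le 1\le\rho^{*}(X_{i})$ for every $i\in I$.

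Putting these together finishes the proof. If $R^{*}(\mathfrak F)<\infty$, the first inequality applied to each $X_{i}$ gives $\rho_{*}(X_{i})\ge\rho^{*}(X_{i})/R^{*}(\mathfrak F)\ge 1/R^{*}(\mathfrak F)$, so $R_{*}(\mathfrak F)\ge 1/R^{*}(\mathfrak F)>0$; symmetrically, $R_{*}(\mathfrak F)>0$ forces $R^{*}(\mathfrak F)\le 1/R_{*}(\mathfrak F)<\infty$. This proves the equivalence of uniform boundedness and uniform discreteness, and the two inequalities $R^{*}(\mathfrak F)\cdot R_{*}(\mathfrak F)\ge 1$ and $R^{*}(\mathfrak F)\cdot R_{*}(\mathfrak F)\le 1$ together force the equality \eqref{4.4.4}. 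I do not anticipate any serious obstacle: the argument is essentially a scaling identity, with weak self-similarity supplying the ability to normalise any nonzero radius to $1$ and the sphere condition making $1$ a common reference value across the whole family. The only step meriting a moment of care is the identification of $\inf(S_{p}(Y)\setminus\{0\})$ with $\rho_{*}(Y)$ and of $\sup(S_{p}(Y)\setminus\{0\})$ with $\rho^{*}(Y)$, which however is immediate from \eqref{4.4.1}.
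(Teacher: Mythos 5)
Your proof is correct, and it reaches the conclusion by a noticeably more streamlined route than the paper, even though both arguments rest on the same two ingredients (rescaling by elements of $S_{p}(Y)$ via weak self-similarity, and the nonempty unit sphere as a common anchor). The paper splits the work in two: the equivalence of uniform boundedness and uniform discreteness is proved by contradiction, extracting a sequence $x_{i_k}$ with $d_{i_k}(x_{i_k},p_{i_k})=t_k\to 0$ and watching the unit-sphere points blow up in the rescaled spaces $t_k^{-1}X_{i_k}$; the equality \eqref{4.4.4} is then obtained separately through an intermediate scale-invariant quantity $Q(\mathfrak F)=\sup_{i}\rho^{*}(X_i)/\rho_{*}(X_i)$, shown to equal both $R^{*}(\mathfrak F)$ and $1/R_{*}(\mathfrak F)$ by extremal-sequence and normalization arguments. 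You instead derive the two pointwise inequalities $\rho^{*}(Y)\le R^{*}(\mathfrak F)\,\rho_{*}(Y)$ and $\rho_{*}(Y)\ge R_{*}(\mathfrak F)\,\rho^{*}(Y)$ directly from the scaling identities, using that $\inf\bigl(S_{p}(Y)\setminus\{0\}\bigr)=\rho_{*}(Y)$ and $\sup\bigl(S_{p}(Y)\setminus\{0\}\bigr)=\rho^{*}(Y)$, and then both the equivalence and the equality fall out at once from $\rho_{*}(X_i)\le 1\le\rho^{*}(X_i)$. This buys a shorter, sequence-free, fully quantitative proof in which the two opposite inequalities $R^{*}R_{*}\ge 1$ and $R^{*}R_{*}\le 1$ are visibly symmetric; the paper's $Q(\mathfrak F)$ is in effect absorbed into your two displayed inequalities. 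The one point deserving explicit mention in a final write-up is that the inequality $\rho_{*}(Y)\ge R_{*}(\mathfrak F)\,t$ for all nonzero $t\in S_{p}(Y)$ passes to the supremum even when that supremum is not attained (take $t_n\uparrow\sup$), and that $\rho_{*}(Y)<\infty$ because each $X_i$ contains a unit-sphere point and hence differs from $\{p_i\}$; with that noted, the argument is complete.
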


\begin{proof}
Assume that $\mathfrak F$ uniformly bounded but not uniformly discrete. Then there is a sequence $(x_{i_k})_{k\in\mathbb N}$ such that

\begin{equation*}\label{4.4.5}
x_{i_k}\in X_{i_k}, x_{i_k}\ne p_{i_k} \,\, \mbox{and} \, \, \lim_{k\to\infty}d_{i_k}(x_{i_k}, p_{i_k})=0
\end{equation*}
Since $S_{i}\ne\varnothing$ for every $i\in I,$ we can find a sequence $(y_{i_k})_{k\in\mathbb N}$ for which $y_{i_k}\in X_{i_k}$ and
$
d_{i_k}(y_{i_k}, p_{i_k})= 1
$
for every $k\in\mathbb N.$  Define $t_k$ to be $d_{i_k}(x_{i_k}, p_{i_k}), k\in\mathbb N.$ Since $\mathfrak F$ is weakly self-similar, the membership
$
t_{k}^{-1}X_{i_k}\in\mathfrak F
$
holds for every $k\in\mathbb N.$ Now we obtain

\begin{equation*}
R^{*}(\mathfrak F)\ge\limsup_{k\to\infty}t_{k}^{-1}d_{i_k}(y_{i_k}, p_{i_k})=\limsup_{k\to\infty}t_{k}^{-1}=\infty.
\end{equation*}
Hence $\mathfrak F$ is not uniformly bounded, contrary to the assumption. Therefore if $\mathfrak F$ is uniformly bounded, then $\mathfrak F$ is uniformly discrete. Similarly we can prove that the uniform discreteness of $\mathfrak F$ implies the uniform boundedness of this family.

Suppose now that
$
R^{*}(\mathfrak F)<\infty.
$
Let us prove equality \eqref{4.4.4}. Define a quantity $Q(\mathfrak F)$ by the rule

\begin{equation*}
Q(\mathfrak F)=\sup_{i\in I}\frac{\rho^{*}(X_i)}{\rho_{*}(X_i)}
\end{equation*}
where $\rho_{*}(X_i)$ is defined by \eqref{4.4.1} and $\rho^{*}(X_i)$ by \eqref{L4.1}.The first part of the theorem implies that $\mathfrak F$ is uniformly discrete. Hence $R_{*}(\mathfrak F)>0,$ that implies $\rho_{*}(X_i)>0, i\in I.$ Moreover the inequality $R^{*}(\mathfrak F)<\infty$ gives us the condition $\rho_{*}(X_i)<\infty.$ Thus $Q(\mathfrak F)$ is correctly defined.
We claim that the equality
$
Q(\mathfrak F)=R^{*}(\mathfrak F)
$
holds. Indeed let $(i_k)_{k\in\mathbb N}$ be a sequence of indexes $i_k\in I$ such that

\begin{equation}\label{4.4.10}
\lim_{k\to\infty}\rho_{*}(X_{i_k})=R^{*}(\mathfrak F).
\end{equation}
Since $S_{i}\ne\varnothing$ for every $i\in I,$ we have $\rho_{*}(X_{i_k})\le 1$ for every $X_{i_k}.$ Consequently
\begin{equation}\label{4.4.11}
Q(\mathfrak F)\ge\limsup_{k\to\infty}\frac{\rho^{*}(X_{i_k})}{\rho_{*}(X_{i_k})}\ge\limsup_{k\to\infty}\rho^{*}(X_{i_k})=\lim_{k\to\infty}\rho^{*}(X_{i_k})\ge R^{*}(\mathfrak F).
\end{equation}
Let us consider a sequence $(i_m)_{m\in\mathbb N}, i_m\in I,$ for which
\begin{equation}\label{4.4.12}
Q(\mathfrak F)=\lim_{m\to\infty}\frac{\rho^{*}(X_{i_m})}{\rho_{*}(X_{i_m})}.
\end{equation}
The quantity $\frac{\rho^{*}(X_{i})}{\rho_{*}(X_{i})}$ is invariant w.r.t. the passage from $X_i$ to $\frac{1}{t}X_{i}, t\in S_{p_i}(X_i).$ Consequently using the uniform discreteness of $\mathfrak F$ and the inequality $\rho_{*}(X_i)\le 1$ (which follows from the condition $S_{i}\ne\varnothing, \, i\in I$), we may assume that

\begin{equation}\label{4.4.13}
\lim_{m\to\infty}\rho_{*}(X_{i_m})=1.
\end{equation}
Limit relations \eqref{4.4.12} and \eqref{4.4.13} imply

\begin{equation*}
Q(\mathfrak F)=\lim_{m\to\infty}\rho_{*}(X_{i_m})\le R^{*}(\mathfrak F).
\end{equation*}
The last inequality and \eqref{4.4.11} give us the equality
$R^{*}(\mathfrak F)=Q(\mathfrak F).$
Reasoning similarly we obtain the equality
$
Q(\mathfrak F)=\frac{1}{R_{*}(\mathfrak F)}.
$
Equality \eqref{4.4.4} follows.
\end{proof}

Let us define a subset $^{\textbf{1}}\mathbf{\Omega_{p}^{X}}$ of the set $\mathbf{\Omega_{p}^{X}}$ of all pretangent spaces to $X$ at $p$ by the rule:
\begin{equation*}\label{4.4.16}
\left(\Omega_{p,\tilde r}^{X}, \rho, \alpha\right)\in \,^{\textbf{1}}\mathbf{\Omega_{p}^{X}}\quad\mbox{if and only if $\tilde r$ is almost decreasing}
\end{equation*}
\begin{equation*}
\mbox{and} \quad \{\delta\in\Omega_{p,\tilde r}^{X}: \rho(\alpha,\delta)=1\}\ne\varnothing
\end{equation*}
where $\alpha=\pi(\tilde p)$ is the marked point of the pretangent space $\Omega_{p,\tilde r}^{X}$ and $\rho$ is the metric on $\Omega_{p,\tilde r}^{X}.$  It is clear that $^{\textbf{1}}\mathbf{\Omega_{p}^{X}}$ meets the condition of Theorem~\ref{th4.4.3} and
\begin{equation}\label{4.4.17}
^{\textbf{1}}\mathbf{\Omega_{p}^{X}}\subseteq\mathbf{\Omega_{p}^{X}(n)}.
\end{equation}
To apply Theorem~\ref{th4.4.3} to Theorem~\ref{th4.4.1} we need the following
\begin{lemma}\label{l4.4.4}
Let $\Omega_{p,\tilde r}^{X}\in\mathbf{\Omega_{p}^{X}(n)}.$ Then there is $^{1}\Omega_{p,\tilde\mu}^{X}\in\,^{\textbf{\emph{1}}}\mathbf{\Omega_{p}^{X}}$ such that
\begin{equation}\label{4.4.18}
\rho_{*}(^{1}\Omega_{p,\tilde\mu}^{X})\le\rho_{*}(\Omega_{p,\tilde r}^{X})\le\rho^{*}(\Omega_{p,\tilde r}^{X})\le\rho^{*}(^{1}\Omega_{p,\tilde \mu}^{X}).
\end{equation}
\end{lemma}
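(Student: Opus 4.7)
\emph{Proof plan.} The strategy is to replace the original scaling sequence $\tilde r$ by the ``intrinsic'' scaling $(d(x_n,p))_{n\in\mathbb N}$ supplied by the normality condition, and then to pass to a subsequence that keeps a countable approximating subfamily of $\tilde X_{p,\tilde r}$ mutually stable together with the witness.

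First, since $\tilde r$ is normal, Proposition~\ref{prop}~$(i_3)$ provides $(x_n)_{n\in\mathbb N}\in\tilde X$ such that $(d(x_n,p))_{n\in\mathbb N}$ is almost decreasing and $\lim_{n\to\infty}d(x_n,p)/r_n=1$. Set $\mu_n:=d(x_n,p)$ and $\tilde\mu:=(\mu_n)_{n\in\mathbb N}$. Since $\mu_n/r_n\to 1$, for every $\tilde y,\tilde z\in\tilde X$ the identity $d(y_n,z_n)/\mu_n=(d(y_n,z_n)/r_n)(r_n/\mu_n)$ shows that mutual stability with respect to $\tilde r$ is equivalent to mutual stability with respect to $\tilde \mu$, with identical limits. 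In particular every finite distance computed inside $\Omega_{p,\tilde r}^{X}$ is also computable from $\tilde \mu$.

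Next, I would choose a countable family $\mathcal Y\subseteq\tilde X_{p,\tilde r}$ containing $\tilde p$ and realizing both radii of $\Omega_{p,\tilde r}^{X}$, i.e.\ such that
\begin{equation*}
\sup_{\tilde y\in\mathcal Y}\tilde d_{\tilde r}(\tilde y,\tilde p)=\rho^{*}(\Omega_{p,\tilde r}^{X})\quad\text{and}\quad \inf\{\tilde d_{\tilde r}(\tilde y,\tilde p):\tilde y\in\mathcal Y,\ \tilde d_{\tilde r}(\tilde y,\tilde p)>0\}=\rho_{*}(\Omega_{p,\tilde r}^{X})
\end{equation*}
(using the usual conventions when the second set is empty). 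Adjoin the witness to form $\mathcal B:=\mathcal Y\cup\{\tilde x\}$. Each $\tilde b\in\mathcal B$ satisfies $\limsup d(b_n,p)/\mu_n<\infty$, so Lemma~\ref{Lem1.6} yields an infinite subsequence $\tilde\mu'=(\mu_{n_k})_{k\in\mathbb N}$ of $\tilde\mu$ along which $\mathcal B':=\{\tilde b':\tilde b\in\mathcal B\}$ is self-stable. Zorn's Lemma extends $\mathcal B'$ to a maximal self-stable family $\tilde X_{p,\tilde\mu'}$ with respect to $\tilde\mu'$, and its metric identification provides the candidate space, which I relabel $^{\textbf{1}}\Omega_{p,\tilde\mu}^{X}$.

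Finally, three verifications complete the proof. The sequence $\tilde\mu'$ is almost decreasing as a subsequence of $\tilde\mu$, and $\pi'(\tilde x')$ lies at distance $\lim_{k\to\infty}d(x_{n_k},p)/\mu_{n_k}=1$ from the marked point, so $^{\textbf{1}}\Omega_{p,\tilde\mu}^{X}\in\,^{\textbf{1}}\mathbf{\Omega_{p}^{X}}$. Since $\mu_{n_k}/r_{n_k}\to 1$, the identity $\tilde d_{\tilde\mu'}(\tilde y',\tilde p)=\tilde d_{\tilde r}(\tilde y,\tilde p)$ holds for every $\tilde y\in\mathcal Y$, so the natural projection maps $\mathcal Y/{\sim}$ isometrically into $^{\textbf{1}}\Omega_{p,\tilde\mu}^{X}$ while fixing the marked point. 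Taking sup and inf over this isometric copy yields $\rho^{*}(^{\textbf{1}}\Omega_{p,\tilde\mu}^{X})\ge\rho^{*}(\Omega_{p,\tilde r}^{X})$ and $\rho_{*}(^{\textbf{1}}\Omega_{p,\tilde\mu}^{X})\le\rho_{*}(\Omega_{p,\tilde r}^{X})$, while the middle inequality $\rho_{*}(\Omega_{p,\tilde r}^{X})\le\rho^{*}(\Omega_{p,\tilde r}^{X})$ is immediate whenever $\Omega_{p,\tilde r}^{X}$ contains a point other than the marked one. The only real obstacle is the countable coordination in the second paragraph---choosing a single $\mathcal Y$ that realizes both radii simultaneously and then restoring mutual stability with the inserted $\tilde x$ after the scaling change---and this is precisely what Lemma~\ref{Lem1.6} is designed to handle.
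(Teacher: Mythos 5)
Your proof is correct and follows essentially the same route as the paper's: pick a countable subfamily realizing both radii, adjoin the normality witness, apply Lemma~\ref{Lem1.6} to extract a subsequence along which the whole collection is self-stable, and complete to a maximal self-stable family whose metric identification is the desired space. The only (cosmetic) difference is that you rescale to $\tilde\mu=(d(x_n,p))_{n\in\mathbb N}$ before extracting the subsequence, which if anything makes the almost-decreasing requirement in the definition of $^{\textbf{1}}\mathbf{\Omega_{p}^{X}}$ more transparent than in the paper's version, where $\tilde\mu=\tilde r'$ is taken directly as a subsequence of $\tilde r$.
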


\begin{proof}
Let $\tilde X_{p,\tilde r}$ be the maximal self-stable family which metric identification coincides with $\Omega_{p,\tilde r}^{X}.$ We can find some sequences $\tilde a^{i}=(a_n^{i})_{n\in\mathbb N}$ and $\tilde b^{i}=(b_n^{i})_{n\in\mathbb N}, i\in\mathbb N$ such that

\begin{equation}\label{4.4.20}
\lim_{i\to\infty}\rho(\pi(\tilde b^{i}), \alpha)=\rho_{*}(\Omega_{p,\tilde r}^{X})\quad\mbox{and}\quad \lim_{i\to\infty}\rho(\pi(\tilde a^{i}), \alpha)=\rho^{*}(\Omega_{p,\tilde r}^{X}).
\end{equation}
Since $\Omega_{p,\tilde r}^{X}\in\mathbf{\Omega_{p}^{X}(n)},$ the scaling sequence $\tilde r$
is normal. Consequently there is $\tilde c=(c_n)_{n\in\mathbb N}\in\tilde X$ such that $$\lim_{n\to\infty}\frac{d(c_n,p)}{r_n}=1.$$ Let us define a countable family $\mathfrak B\subseteq\tilde X$ as $$\mathfrak B=\{\tilde b^{i}: i\in\mathbb N\}\cup\{\tilde a^{i}: i\in\mathbb N\}\cup\{\tilde c\}.$$ The family $\mathfrak B$ satisfies the condition of Lemma~\ref{Lem1.6}. Consequently there is an infinite subsequence $\tilde r'=(r_{n_k})_{k\in\mathbb N}$ the scaling sequence $\tilde r$ for which the family $$\mathfrak B'=\{(b_{n_k}^{i})_{k\in\mathbb N}: i\in\mathbb N\}\cup\{(a_{n_k}^{i})_{k\in\mathbb N}: i\in\mathbb N\}\cup\{(c_{n_k})_{k\in\mathbb N}\}$$ is self-stable w.r.t. $\tilde r'$. Completing $\mathfrak B'$ to a maximal self-stable family and passing to the metric identification of it we obtain the desired pretangent space $^{1}\Omega_{p,\tilde\mu}^{X}$ with $\tilde\mu=\tilde r'.$
\end{proof}
\emph{Proof of Theorem~\ref{th4.4.1}.} Inclusion \eqref{4.4.17} implies the inequalities
\begin{equation*}
R^{*}(^{\textbf{1}}\mathbf{\Omega_{p}^{X}})\le R^{*}(\mathbf{\Omega_{p}^{X}(n)})
\quad \mbox{and}\quad
R_{*}(\mathbf{\Omega_{p}^{X}(n)})\le R_{*}(^{\textbf{1}}\mathbf{\Omega_{p}^{X}}).
\end{equation*}
The converse inequalities follow from \eqref{4.4.18}. Consequently we have the equalities
\begin{equation}\label{4.4.21}
R^{*}(^{\textbf{1}}\mathbf{\Omega_{p}^{X}})= R^{*}(\mathbf{\Omega_{p}^{X}(n)})\quad\mbox{and}\quad
R_{*}(\mathbf{\Omega_{p}^{X}(n)})=R_{*}(^{\textbf{1}}\mathbf{\Omega_{p}^{X}}).
\end{equation}
Now Theorem~\ref{th4.4.1} follows directly from Theorem~\ref{th4.4.3} and \eqref{4.4.21}.$\qquad\qquad\quad\square$

\bigskip

The sum of theorems~\ref{th4.4.1} and \ref{th3.10} yields the following result that was announced in \cite{BD3}.

\begin{theorem}\label{th4.4.5}
Let $(X,d,p)$ be a metric space with a marked point $p\in ac X.$ Then the following three conditions are equivalent.
\item[\rm(i)]\textit{$\mathbf{\Omega_{p}^{X}(n)}$ is uniformly bounded.}
\item[\rm(ii)]\textit{$\mathbf{\Omega_{p}^{X}(n)}$ is uniformly discrete.}
\item[\rm(iii)]\textit{$S_{p}(X)\in \textbf{CSP}.$ }

Moreover if $\mathbf{\Omega_{p}^{X}(n)}$ is uniformly bounded, then
\begin{equation*}
R^{*}(\mathbf{\Omega_{p}^{X}(n)})=M(\tilde L)\quad\mbox{and}\quad
R_{*}(\mathbf{\Omega_{p}^{X}(n)})=\frac{1}{M(\tilde L)}
\end{equation*} where the quantity $M(\tilde L)$ was defined by \eqref{L13}.
\end{theorem}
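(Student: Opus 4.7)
My plan is to deduce this theorem directly from the two main results already established in this paper: Theorem~\ref{th3.10}, which handles the equivalence (i)$\Leftrightarrow$(iii) together with the formula for $R^{*}(\mathbf{\Omega_{p}^{X}(n)})$, and Theorem~\ref{th4.4.1}, which handles (i)$\Leftrightarrow$(ii) together with the reciprocal relation between $R^{*}$ and $R_{*}$. As a first step, I would observe that the standing hypothesis $p\in ac X$ already guarantees $\mathbf{\Omega_{p}^{X}(n)} \neq \varnothing$: picking any sequence $(x_n)$ in $X \setminus \{p\}$ with $x_n \to p$, and then (extracting a subsequence if needed) arranging $(d(x_n,p))_{n\in\mathbb N}$ to be decreasing, one sees that $r_n := d(x_n,p)$ defines a normal scaling sequence in the sense of Definition~\ref{D4.2}. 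So a pretangent space with normal scaling exists, and the hypothesis required to invoke Theorem~\ref{th4.4.1} is in force.

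Next I would chain the two equivalences. Theorem~\ref{th3.10} yields (i)$\Leftrightarrow$(iii) and, when these hold together with $p\in ac X$, the identity $R^{*}(\mathbf{\Omega_{p}^{X}(n)}) = M(\tilde L)$ for any universal $\tilde L \in \tilde I_{S_p(X)}^{d}$. Theorem~\ref{th4.4.1} supplies (i)$\Leftrightarrow$(ii) together with the relation $R_{*}(\mathbf{\Omega_{p}^{X}(n)}) = 1/R^{*}(\mathbf{\Omega_{p}^{X}(n)})$, valid in the uniformly bounded case. Composing these gives the three-way equivalence (i)$\Leftrightarrow$(ii)$\Leftrightarrow$(iii), and then substituting the first numerical identity into the second produces $R_{*}(\mathbf{\Omega_{p}^{X}(n)}) = 1/M(\tilde L)$, as desired.

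There is essentially no genuine obstacle left in the argument, because all the substantive work has been performed in Theorem~\ref{th3.10} (whose proof drew on Proposition~\ref{Pr4.4}, Proposition~\ref{Pr4.6}, Theorem~\ref{ImpTh} and Lemma~\ref{Lem2.15}) and in Theorem~\ref{th4.4.1} (whose proof reduced matters to the abstract Theorem~\ref{th4.4.3} via the weakly self-similar subfamily $^{\textbf{1}}\mathbf{\Omega_{p}^{X}}$ and Lemma~\ref{l4.4.4}). The only point worth mild care is the compatibility of hypotheses across the two theorems: Theorem~\ref{th4.4.1} demands $\mathbf{\Omega_{p}^{X}(n)} \neq \varnothing$, while the numerical identity in Theorem~\ref{th3.10} is asserted under $p\in ac X$. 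The standing hypothesis $p\in ac X$ delivers both requirements simultaneously, so the two results mesh cleanly and the final numerical equalities follow by transitivity.
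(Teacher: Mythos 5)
Your proposal is correct and is exactly the paper's own argument: the authors state that Theorem~\ref{th4.4.5} is simply the ``sum'' of Theorems~\ref{th3.10} and \ref{th4.4.1}, which is the chaining you perform. Your extra remark that $p\in ac X$ guarantees $\mathbf{\Omega_{p}^{X}(n)}\ne\varnothing$ (so that Theorem~\ref{th4.4.1} is applicable) is a small but welcome point of care that the paper leaves implicit.
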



\medskip

\textbf{Viktoriia Viktorivna Bilet}

Institute of Applied Mathematics and Mechanics of NASU, R. Luxemburg str. 74, Donetsk 83114, Ukraine

\textbf{E-mail:} biletvictoriya@mail.ru

\bigskip

\textbf{Oleksiy Al'fredovich Dovgoshey}

Institute of Applied Mathematics and Mechanics of NASU, R. Luxemburg str. 74, Donetsk 83114, Ukraine

\textbf{E-mail:} aleksdov@mail.ru

\end{document}